\newtheorem{theorem}{Theorem}[section]
\newtheorem{remark}[theorem]{Remark}
\newtheorem{lemma}[theorem]{Lemma}
\def\bee{\begin{eqnarray}}
\def\beee{\begin{eqnarray*}}
\def\eee{\end{eqnarray}}
\def\eeee{\end{eqnarray*}}
\def\ba{\begin{array}}
\def\ea{\end{array}}
\def\R{\mathbb R}
\numberwithin{equation}{section}
\begin{document}

\title[Lorentzian harmonic maps]
{A global weak solution to the Lorentzian harmonic map flow}

\author{Xiaoli Han, Lei Liu, Liang Zhao}

\address{Xiaoli Han, Department of Mathematical Sciences, Tsinghua University \\ Beijing 100084, China}
\email{xlhan@math.tsinghua.edu.cn}

\address{Max Planck Institute for Mathematics in the Sciences\\ Inselstrasse 22\\ 04103 Leipzig, Germany}
\email{leiliu@mis.mpg.de or llei1988@mail.ustc.edu.cn}

\address{Liang Zhao, School of Mathematical Sciences,
Beijing Normal University\\
Laboratory of Mathematics and
Complex Systems, Ministry of Education\\ Beijing 100875, China}
\email{liangzhao@bnu.edu.cn}

\thanks {The research is supported by NSFC No.11471014, No.11471299 and the Fundamental Research Funds for the Central Universities}

\subjclass[2010]{  }
\keywords{harmonic map, heat flow, Lorentzian manifold, warped product, blow up, weak solution.}

\date{\today}

\maketitle

\begin{abstract}
We investigate a parabolic-elliptic system which is related to a harmonic map from a compact Riemann surface with a smooth boundary into a Lorentzian manifold with a warped product metric. We prove that there exists a unique global weak solution for this system which is regular except for at most finitely many singular points.
\end{abstract}

\section{introduction}

To describe our problem, we present some notations first. Let $(M, h)$ be a compact Riemann surface with a smooth boundary $\partial M$ and $N\times{\mathbb{R}}$ be a Lorentzian manifold equipped with a warped product metric of the following form
$$g=g_N-\beta d^2\theta,$$
where $({\mathbb{R}}, d\theta^2)$ is the $1$-dimensional Euclidean space, $(N, g_N)$ is an $n$-dimensional compact Riemannian manifold which, by Nash's embedding theorem, is embedded isometrically into some Euclidean space ${\mathbb{R}}^K$ and $\beta$ is
a positive $C^{\infty}$ function on $N$. Since $N$ is compact, there exist two positive constants $\lambda$ and $\Lambda$ such that
$$
0<\lambda\leq \beta(y)\leq \Lambda, \forall y\in N.
$$
For more details on such kind of manifolds, we refer to \cite{KSHM, ONeill}.

We consider the following Lagrangian
\begin{equation}\label{lag}
E_g(u, v)= \frac{1}{2}\int_{M} \left\{|\nabla u|^2- \beta(u)|\nabla v|^2 \right\} dv_h,
\end{equation}
which is called the Lorentzian energy of a map $(u,v)$ from $M$ to $N\times{\mathbb{R}}$. It is easy to see that $E_g(\cdot,\cdot)$ is conformally invariant in dimension two. A critical point $(u,v)$ of the Lagrangian \eqref{lag} is called a Lorentzian harmonic map from $M$ into the Lorentzian manifold  $(N\times{\mathbb{R}},g)$.

Via direct calculation, one can derive the  Euler-Lagrange equations for \eqref{lag},
\begin{equation}\label{Eleq}
 \left\{
  \begin{array}{cc}
    -\Delta u    = A(u)(\nabla u, \nabla u)-B^{\top}(u)|\nabla v|^2,\ &in\ M, \\
     -div\{\beta(u)\nabla v\} = 0 ,\ &in \ M.\\
  \end{array}
\right.
\end{equation}
where $A$ is the second fundamental form of $N$ in ${\mathbb{R}}^K$, $B(u):=(B^1, B^2, \cdots, B^K)$ with
$$
B^j:=-\frac{1}{2}\frac{\partial\beta(u)}{\partial y^j}
$$
and $B^{\top}$ is the tangential part of $B$ along the map $u$. For  details, see \cite{Zhu}.
We denote
$$
E(u;\Omega)=\frac{1}{2}\int_{\Omega} |\nabla u|^2 dx,
$$
and
$$
E(v;\Omega)=\frac{1}{2}\int_{\Omega} |\nabla v|^2 dx,
$$
where $\Omega$ is a bounded domain in $M$. For brevity, we will omit $\Omega$ if the domain is clear from the context.

The regularity theory of Lorentzian harmonic maps was studied in \cite{Isobe-2,Zhu} for dimension two and in \cite{Isobe-1} for some higher dimensional minimal type solutions. For partial regularity of stationary Lorentzian harmonic maps, one can refer to \cite{Li-Liu}. When the target manifold is a Lorentzian manifold, the existence of geodesics was proved in \cite{bencifortunatogiannoni}. In \cite{greco1,greco2}, Greco constructed a smooth harmonic map via some developed variational methods.

For the Riemannian case, Al'ber \cite{alber1,alber2},  Eells-Sampson \cite{EellsSampson} and Hamilton \cite{hamilton} proved an existence result using the harmonic map heat flow when the sectional curvature of the target manifold is non-positive. The heat flow associated with the Euler-lagrange equations can be interpreted as a gradient flow for the energy functional. In the Lorentzian case, because of the sign convention, although the Lorentzian energy $E_g$ still decreases along the corresponding heat flow, we can even not do blow-up analysis under an energy condition such as $E_g\leq \Lambda$ for some constant $\Lambda>0$. In order to construct nontrivial
Lorentzian harmonic maps in the homotopic class of the initial map and overcome the above obstacles, Han-Jost-Liu-Zhao \cite{HJLZ} introduced the following parabolic-elliptic system
\begin{align}\label{heateq}
\begin{cases}
\partial_t u=\Delta u+ A(u)(\nabla u,\nabla u)-B^\top(u)|\nabla v|^2,\ &in\ M\times [0,T), \\
-div (\beta(u)\nabla v)=0, \ &in\ M\times [0,T).
\end{cases}
\end{align}
They proved the local existence of a regular solution to \eqref{heateq} with the boundary-initial data
\begin{align}\label{boundary-data}
\begin{cases}
u(x,t)=\phi(x), \  &on\  \partial M\times\{t\geq 0\},\\
u(x,0)=\phi_0(x), \  &in\  M,\\
v(x,t)=\psi(x),\  &on \  \partial M\times\{t\geq 0\},\\
\phi_0(x)=\phi(x),\  &on\  \partial M.
\end{cases}
\end{align}

\begin{theorem}[Theorem 3.3, \cite{HJLZ}]\label{thm:shortime-existence}
For any $$\phi_0\in C^{2+\alpha}(M,N),\ \phi\in C^{2+\alpha}(\partial M,N),\ \psi\in C^{2+\alpha}(\partial M, \R)$$ where $0<\alpha<1$, the problem \eqref{heateq} and \eqref{boundary-data}  admits a unique solution $$u\in \cap_{0<s<T} C^{2+\alpha,1+\alpha/2}(M\times [0,s]),$$ and
$$v,\nabla v\in \cap_{0<s<T} C^{\alpha,\alpha/2}(M\times [0,s]),\ v\in L^\infty([0,T);C^{2+\alpha}(M)),$$ for some time $T>0$. Here, the maximum existence time $T$ is characterized by the condition that
$$\limsup_{x\in M,t\to T}E(u;B^M_r(x))>\epsilon_0\mbox{ for any } r>0,$$
where $\epsilon_0$ is a constant depending only on $M,N,\phi,\psi$ and $\phi_0$ and $B^M_r(x)$ is a geodesic ball in $M$. Moreover, the set
\begin{align}\label{def:singular-set}
S(u,T):=\{x\in M|\limsup_{t\to T}E(u;B^M_r(x))>\epsilon_0\mbox{ for any } r>0\}
\end{align}
is finite and a point in it is called a singularity at the singular time $T$.
\end{theorem}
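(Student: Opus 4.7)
The natural plan is to decouple the parabolic--elliptic system. Given $u$, the elliptic equation $-\operatorname{div}(\beta(u)\nabla v)=0$ in $M$ with $v|_{\partial M}=\psi$ is uniformly elliptic because $\beta(u)\geq \lambda>0$; when $u\in C^{\alpha}$ its coefficients are H\"older continuous, so Schauder theory yields a unique solution $v=\mathcal{V}(u)\in C^{2+\alpha}(M)$ depending Lipschitz continuously on $u$. Substituting closes the parabolic equation for $u$ alone:
\[
\partial_t u = \Delta u + A(u)(\nabla u,\nabla u) - B^{\top}(u)\,|\nabla \mathcal{V}(u)|^2,
\]
with the prescribed initial and boundary data from \eqref{boundary-data}. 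Each nonlinearity on the right is quadratic in a gradient but otherwise depends benignly on $u$, putting the problem within reach of standard quasilinear parabolic techniques.

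For short-time existence and uniqueness, I would run a contraction mapping in the parabolic H\"older space $X_\delta:=C^{2+\alpha,1+\alpha/2}(M\times[0,\delta])$: for any candidate $\tilde u\in X_\delta$ close to $\phi_0$, solve the linear heat equation whose right-hand side is built from $\tilde u$ and $\mathcal{V}(\tilde u)$ via the formulas above; for $\delta$ small, and after working in a tubular neighborhood of $N\subset\mathbb{R}^{K}$ composed with the nearest-point projection, this map is a contraction. The fixed point is a classical solution $u\in X_\delta$, and $v=\mathcal{V}(u)$ inherits $C^{2+\alpha}$ regularity in $x$ for each $t$ while only being $C^{\alpha,\alpha/2}$ jointly in $(x,t)$ since $u$ is merely $C^{1+\alpha/2}$ in $t$; this matches the precise regularity in the statement.

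The core analytic point is the characterization of the maximal existence time $T$. Energy monotonicity is the starting point: since $v(t)=\mathcal{V}(u(t))$ satisfies the elliptic equation with fixed Dirichlet data, the $v$-variation of $E_g$ vanishes, and the gradient-flow structure in $u$ yields
\[
\frac{d}{dt}E_g(u(t),v(t)) = -\int_{M}|\partial_t u|^2\,dv_h \leq 0.
\]
Because $\mathcal{V}(u)$ minimizes $\int\beta(u)|\nabla w|^2$ over admissible $w$, the quantity $\int|\nabla v|^2$ is bounded uniformly in $t$, so the Dirichlet energy $E(u(t))$ is also uniformly bounded. I would then establish the $\epsilon$-regularity lemma: there is $\epsilon_0>0$ such that if $E(u;B^{M}_{2r}(x_0))<\epsilon_0$ on a parabolic cylinder $P_r(x_0,t_0)$, then all higher parabolic H\"older norms of $u$ on the half-cylinder are controlled. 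The argument first uses elliptic $L^p$ estimates to convert small local energy of $u$ into control on $\nabla v$, then runs a Bochner--Moser iteration on $|\nabla u|^2$, and finally bootstraps via Schauder. If no ball of radius $r$ accumulates $\geq\epsilon_0$ of $u$-energy as $t\to T$, the $\epsilon$-regularity bounds extend the flow past $T$, contradicting maximality, and a standard covering argument combined with the uniform energy bound gives $\#S(u,T)\leq C\,E_g(\phi_0,\mathcal{V}(\phi_0))/\epsilon_0<\infty$. The main obstacle is this $\epsilon$-regularity lemma: the unfavorable Lorentzian sign prevents absorbing the $|\nabla v|^{2}$ term into the Lagrangian, so its local control must be extracted from the elliptic equation together with the global energy bound on $v$, rather than from any local variational estimate.
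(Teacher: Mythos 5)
This theorem is imported verbatim from \cite{HJLZ} (their Theorem 3.3) and the present paper offers no proof of it, so the only meaningful comparison is with that reference: your outline --- decoupling the elliptic equation by Schauder theory, a contraction argument in parabolic H\"older spaces for short-time existence, and energy monotonicity plus $\epsilon$-regularity plus a covering argument for the characterization of $T$ and the finiteness of $S(u,T)$ --- is essentially the approach taken there and reflected in the lemmas of Section 2 here (Lemmas \ref{energy}, \ref{lem:Dirichlet-energy}, \ref{lem:two-balls}, \ref{lem:small-energy-regularity}). Two small slips worth fixing: with coefficients only in $C^{\alpha}$ the divergence-form equation yields $v\in C^{1+\alpha}$, not $C^{2+\alpha}$ (full spatial regularity of $v$ only comes once $u$ is $C^{2+\alpha}$ in space in the bootstrap); and the count of singular points must be driven by the uniform Dirichlet energy bound $E(u(t))\le E(\phi_0)+\Lambda E(\psi)$, not by $E_g$, since the Lorentzian energy does not by itself control $E(u)$ from above. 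You also implicitly use, and should state, the local energy inequality of Lemma \ref{lem:two-balls} to propagate smallness of $E(u(t);B^M_r)$ over the time interval $[t_0-4r^2,t_0]$ required by the $\epsilon$-regularity hypothesis before concluding that the flow extends past $T$.
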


By using the blow-up analysis, they \cite{HJLZ} got a global existence result to the problem \eqref{heateq} and \eqref{boundary-data} by assuming either some geometric conditions on the target manifold or small energy of the boundary-initial data. The result implies the existence of Lorentzian harmonic maps in a given homotopy class. For the further blow-up behavior, including the energy identities and no neck properties, we refer to \cite{hanzhaozhu} for Lorentzian harmonic maps and \cite{HJLZ-02} for approximate Lorentzian harmonic maps and the corresponding heat flow.

Inspired by works of Struwe for harmonic map flow in \cite{St}, our main purpose in this paper is to study the existence of a global weak solution of (\ref{heateq}) with the boundary-initial data \eqref{boundary-data} in the function space
\begin{eqnarray*}
V(M_s^t):&=&\{(u, v): M \times [s, t]\rightarrow N\times {\mathbb{R}}| \sup_{s\leq\tau\leq t}\|\nabla u\|_{L^2(M)}+\sup_{s\leq\tau\leq t}\|\nabla v\|_{L^4(M)}\\
&&+\int_s^t\int_M (|\partial_t u|^2+|\nabla^2 u|^2)dM dt<\infty\}.
\end{eqnarray*}
Here and in the sequel, we use the notations $M_s^t=M\times [s, t]$ and $M^T=M\times [0, T]$. $  C^{2+\alpha, 1+\frac{\alpha}{2}}(M^T)$ denote the usual H\"{o}lder spaces and $W^{k,p}(M,N)$ denote the Sobolev space $$W^{k,p}(M,N)=\{u\in W^{k,p}(M,\R^K),\ u(x)\in N\ for \ a.e.\ x\in M\}.$$

Precisely, our main result is

\begin{theorem}\label{global}
For any $\phi_0\in W^{1,2}(M, N)$, $\phi\in C^{2+\alpha}(\partial M, N)$ and $\psi\in C^{2+\alpha}(\partial M, {\mathbb{R}})$, there exists a unique global weak solution $(u,v)$ of (\ref{heateq}) with the boundary-initial data \eqref{boundary-data} on $M\times [0,\infty)$.

Moreover, there exist at most finitely many singular points $\{(x^l,T_k)\}_{l=1}^{l_k}$ which is characterized by
\begin{equation}\label{singularity}
\limsup_{t\rightarrow T_k}E(u(t); B_r^M(x^l))>\overline{\epsilon}\ \ \text{for all}\ \ r>0.
\end{equation}
Here $\overline{\epsilon}$ is the constant defined in \eqref{equat:02}, $B_R^M(x)$ is the geodesic ball in $M$, $1\leq l\leq l_k$, $1\leq k\leq K$ and $\sum_{k=1}^Kl_k\leq L$ for some non-negative integers $K$ and $L$ depending on $M$, $N\times {\mathbb{R}}$, $\Lambda$, $\phi,\ \psi$ and $E(\phi_0)$. The solution is regular outside the singular set, which means that
$u\in C_{loc}^{2+\alpha,1+\alpha/2}(M\times (0,\infty))$ except for singular time $ \{T_k\}_{k=1}^K$ and $\ u,\nabla u\in C_{loc}^{\alpha,\alpha/2}(M\times (0,\infty))$ outside singular points $ \cup_{k=1}^K\{(x^l,T_k)\}_{l=1}^{l_k}$. Moreover, $\ v,\nabla v\in C_{loc}^{\alpha,\alpha/2}(M\times (0,\infty))$, $v\in L^\infty\left((0,\infty);C^{2+\alpha}(M)\right)$ except for singular time $ \{T_k\}_{k=1}^K$ and $v\in L^\infty\left((0,\infty);C^{2+\alpha}_{loc}(M)\right)$ outside singular points $ \cup_{k=1}^K\{(x^l,T_k)\}_{l=1}^{l_k}$

Finally, there exist a nontrivial Lorentzian harmonic map $(u_\infty,v_\infty)\in C^{2+\alpha}(M,N\times\R)$ with the boundary data $(u_\infty,v_\infty)|_{\partial M}=(\phi,\psi)$ and a time sequence $t_i\rightarrow \infty$, such that $\{(u,v)(\cdot, t_i)\}$ converges to $(u_\infty,v_\infty)$ weakly in $W^{1,2}(M)$ as $i\rightarrow \infty$ and strongly in $C^1(M\setminus \mathcal{S}_\infty)$ where $\mathcal{S}_\infty$ is a finite points set defined by \eqref{def:01}.
\end{theorem}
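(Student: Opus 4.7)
The plan is to adapt Struwe's iteration scheme for the Riemannian harmonic map flow to this coupled parabolic--elliptic Lorentzian system, using Theorem \ref{thm:shortime-existence} as the local building block. The first step is to establish a global energy a priori bound. Differentiating the Lorentzian energy along the flow and using the elliptic equation for $v$ (together with $\partial_t v \cdot \text{div}(\beta(u)\nabla v)=0$ after testing) yields the monotonicity
\[
\frac{d}{dt}E_g(u(t),v(t))=-\int_M|\partial_t u|^2\,dv_h.
\]
Since $v(\cdot,t)$ minimizes $\int_M\beta(u)|\nabla w|^2$ among maps with trace $\psi$, and $\lambda\le\beta\le\Lambda$, comparison with a fixed harmonic extension of $\psi$ gives a uniform bound $\int_M\beta(u)|\nabla v|^2\le C(\psi,\Lambda,\lambda,M)$. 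Writing $E(u)=E_g(u,v)+\tfrac12\int\beta(u)|\nabla v|^2$ thus furnishes a uniform bound $\sup_{t\ge 0} E(u(t))\le E(\phi_0)+C$, together with the global dissipation $\int_0^\infty\|\partial_t u\|_{L^2(M)}^2\,dt\le E(\phi_0)+C$.

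Next, I would prove a small-energy $\varepsilon$-regularity statement: there exists $\bar\varepsilon>0$ such that whenever $E(u(t);B^M_{2r}(x))<\bar\varepsilon$ uniformly on a parabolic cylinder, $u$ has interior a priori estimates in $C^{2+\alpha,1+\alpha/2}$. The perturbation $B^\top(u)|\nabla v|^2$ is handled by a Meyers-type improvement for the $v$-equation, which provides $\|\nabla v\|_{L^{2+\delta}}$ depending only on $\lambda,\Lambda$ and the energy, so the term is subcritical in the parabolic scaling. Combined with the characterization of the maximal existence time in Theorem \ref{thm:shortime-existence}, only finitely many points $\{x^l\}_{l=1}^{l_1}$ concentrate energy at least $\bar\varepsilon$ at the first singular time $T_1$. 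At $T_1$ one passes to a weak $W^{1,2}$ limit $u(\cdot,T_1)$, solves the elliptic equation for $v(\cdot,T_1)$ with trace $\psi$, and restarts the flow with $u(\cdot,T_1)$ as new initial datum via Theorem \ref{thm:shortime-existence}. By the standard bubble-energy quantization each singular time consumes energy at least $\bar\varepsilon$ from the a priori reservoir, so the total number of singular times $K$ and the total number of singular points $\sum l_k$ are bounded by a constant depending on $E(\phi_0)$, $\Lambda$, $\phi$, $\psi$.

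For long-time behaviour, the dissipation integral being finite provides a sequence $t_i\to\infty$, avoiding all singular times, along which $\|\partial_t u(\cdot,t_i)\|_{L^2}\to 0$. The uniform energy bound lets us extract a weak $W^{1,2}$-limit $(u_\infty,v_\infty)$; passing to the limit in the two equations (using the subcritical control on $v$) shows that $(u_\infty,v_\infty)$ is a weak, hence by elliptic regularity classical, Lorentzian harmonic map with boundary values $(\phi,\psi)$. Strong $C^1$-convergence off the finite concentration set $\mathcal{S}_\infty$ of points where $\limsup_{i}E(u(t_i);B^M_r(\cdot))\ge\bar\varepsilon$ follows again from $\varepsilon$-regularity, and nontriviality of the limit is forced by the prescribed trace $\phi$ on $\partial M$. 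Uniqueness in the class $V(M_s^t)$ is obtained by a Ladyzhenskaya-type argument: subtract two candidate solutions, test the $u$-difference against itself, use the $L^2_tW^{2,2}_x\cap L^\infty_tW^{1,2}_x$ control together with the $L^\infty_tL^4_x$ control of $\nabla v$ (and a $W^{2,p}$-difference estimate for the elliptic $v$-equation in terms of $\|u-\tilde u\|_{L^\infty_tL^2_x}$), then apply Gronwall.

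The main obstacle I expect is precisely the coupling between the parabolic $u$-equation and the elliptic $v$-equation under blow-up: one must verify that the forcing $\beta(u)|\nabla v|^2$ does not itself concentrate at the singular points beyond what the $\varepsilon$-regularity can absorb, and that $\int_M\beta(u)|\nabla v|^2$ is continuous under weak $W^{1,2}$-convergence of $u$. Establishing the higher integrability of $\nabla v$ independently of the smoothness of $u$, and showing that bubbles in $u$ carry away a quantised amount of energy in the presence of the $v$-perturbation, are the delicate technical points on which the whole induction on singular times hinges.
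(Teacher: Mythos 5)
Your overall strategy is the same Struwe-type iteration the paper uses, and most of the ingredients you list (energy monotonicity, comparison bounds for $v$, $\varepsilon$-regularity, energy quantization at each singular time, dissipation along a subsequence $t_i\to\infty$, a Ladyzhenskaya-type uniqueness argument) correspond to the paper's Lemmas \ref{energy}--\ref{energyestimate} and Steps 2--4. However, there is a genuine gap at the very start of the construction: you invoke Theorem \ref{thm:shortime-existence} directly for the initial datum $\phi_0\in W^{1,2}(M,N)$ and again for the weak limit $u(\cdot,T_1)\in W^{1,2}$, but that theorem requires $C^{2+\alpha}$ initial data. The whole point of the paper's Steps 1--2 is to bridge exactly this: one approximates $\phi_0$ by smooth maps $\phi_{0m}$ with the same trace, shows via the local energy inequality (Lemma \ref{lem:two-balls}) that the energies of the approximating flows stay below the threshold $\overline{\epsilon}$ on balls of a fixed radius $R$ for a time $T=O(R^2\overline{\epsilon})$ \emph{uniform in $m$}, derives uniform $V(M^T)$ bounds (Lemma \ref{energyestimate}), and passes to a weak limit which is then upgraded to a classical solution on $(0,T]$ by interior $\varepsilon$-regularity. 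Without this non-concentration estimate and the uniform lower bound on the existence time of the approximating flows, your induction on singular times cannot even begin, and the restart at each $T_k$ fails for the same reason.

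A second, more technical issue is your treatment of the coupling term. A Meyers-type improvement gives only $\nabla v\in L^{2+\delta}$, hence $B^\top(u)|\nabla v|^2\in L^{1+\delta/2}$, which in two space dimensions is \emph{not} subcritical for the parabolic $W^{2,1}_p\hookrightarrow C^\alpha$ embedding (one needs $p>2$, i.e., $\nabla v\in L^4$ at least). The paper instead relies on the global $W^{1,p}$-estimate for $v$ for every $p$ (Lemma \ref{lem:Lp-estimates-v}), which exploits the $C^{2+\alpha}$ regularity of the boundary datum $\psi$ and gives $\|\nabla v(\cdot,t)\|_{L^p}\leq C\|\nabla\psi\|_{L^p}$ uniformly in $t$; this is what makes $|\nabla v|^4$ integrable in the $W^{2,2}$-energy estimate and $|\nabla v|^2$ an admissible $L^p$ forcing in the regularity theory. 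You correctly flag the coupling as the delicate point, but the tool you propose for it is too weak.
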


In this paper, we just focus on the two dimension case. For the higher dimensions, they need more techniques including the new monotonicity formula, regularity theorem and so on. We leave it to the sequel to this paper.

The rest of this paper is organized as follows. In Section 2, we firstly recall some results which will be used in this paper. Secondly, we derive some basic lemmas, including a priori $W^{2,2}$-estimates and uniqueness for the weak solution. Our main Theorem \ref{global} will be proved in Section 3.

\section{preliminary results}

In this section,we will establish some basic lemmas for the Lorentzian harmonic map flow.

By the standard elliptic theory, for $\phi\in C^{2+\alpha}(\partial M)$, there exists a unique solution $u\in C^{2+\alpha}(M)$ of the equation
\begin{align}\label{equat:01}
\begin{cases}
\Delta u=0\ &in\ M,\\
u(x)=\phi(x)\ &on \ \partial M,
\end{cases}
\end{align}
satisfying $$\|u\|_{C^{2+\alpha}(M)}\leq C(\alpha,M)\|\phi\|_{C^{2+\alpha}(\partial M)}.$$
We call this solution $u$ an extension of $\phi$. Similarly, there also exists a harmonic function $v$ with the boundary data $\psi$, which is called an extension of $\psi$. For simplicity, we still denote $(u,v)$ by $(\phi,\psi)\in C^{2+\alpha}(M)$ and in the following, we use the extension when  needed.

Firstly, let us recall some results which will be used in this paper.

\begin{lemma}[Theorem 2.2 and Remark 2.1, P. 62, P. 63 in \cite{lady} or Lemma 4.1 in \cite{chenlevine}]\label{lem:01}
For any smooth bounded domain $\Omega\subset \mathbb{R}^2$ and any function $u\in W^{1,2}(\Omega)$, there exists a constant $C>0$ depending on the shape of $\Omega$ such that
\begin{equation}
\int_\Omega|u|^4dx\leq C \int_\Omega|u|^2dx\big(\int_\Omega|\nabla u|^2dx+\frac{1}{|\Omega|}\int_\Omega|u|^2dx\big),
\end{equation}
where $|\Omega|$ is the volume of $\Omega$.
\end{lemma}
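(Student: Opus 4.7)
The plan is to derive the claimed $L^4$ bound from the classical whole-plane Ladyzhenskaya inequality $\|w\|_{L^4(\mathbb{R}^2)}^4 \leq 2\|w\|_{L^2(\mathbb{R}^2)}^2 \|\nabla w\|_{L^2(\mathbb{R}^2)}^2$ via an extension operator, combined with a scaling normalization that produces the $|\Omega|^{-1}$ factor on the lower-order term. A first observation I would use is that both sides of the claimed inequality are invariant under the dilation $x\mapsto \lambda x$, so it suffices to prove the inequality once for a representative of the similarity class of $\Omega$ (say, after normalizing $|\Omega|=1$) and then rescale.

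For the whole-plane version, I would give the standard one-dimensional slicing argument: from the pointwise identities $w^2(x_1,x_2)\leq 2\int_{\mathbb{R}}|w\,\partial_1 w|(t,x_2)\,dt$ and the analogous bound in $x_2$, multiply the two estimates, integrate over $\mathbb{R}^2$, apply Cauchy-Schwarz in each resulting factor, and conclude $\int|w|^4 \leq 2\|w\|_{L^2}^2\|\nabla w\|_{L^2}^2$ for $w\in C_c^\infty(\mathbb{R}^2)$; density of $C_c^\infty$ in $W^{1,2}(\mathbb{R}^2)$ extends the estimate to the whole Sobolev space. Next, for $u\in W^{1,2}(\Omega)$, I would invoke Stein's extension theorem (applicable because $\Omega$ is smooth, hence Lipschitz) to produce a compactly supported $\tilde u\in W^{1,2}(\mathbb{R}^2)$ with $\|\tilde u\|_{L^2(\mathbb{R}^2)}\leq C\|u\|_{L^2(\Omega)}$ and $\|\nabla \tilde u\|_{L^2(\mathbb{R}^2)}\leq C(\|\nabla u\|_{L^2(\Omega)}+\|u\|_{L^2(\Omega)})$, where $C$ depends only on the shape of the normalized $\Omega$. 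Plugging $\tilde u$ into the whole-plane inequality and restricting to $\Omega$ proves the bound in the unit-area case; undoing the scaling then transforms the extra $\|u\|_{L^2(\Omega)}^2$ on the right-hand side into $|\Omega|^{-1}\|u\|_{L^2(\Omega)}^2$.

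The main delicate point is tracking the shape-versus-size dependence in the constants so that the $|\Omega|^{-1}$ factor emerges cleanly from the rescaling rather than being absorbed into the extension constant; this is why I normalize $|\Omega|=1$ first and only rescale at the end. A shorter alternative route, which I would mention as a cross-check, bypasses extension: apply the scale-invariant Sobolev embedding $\|v\|_{L^2(\Omega)}\leq C\bigl(\|\nabla v\|_{L^1(\Omega)}+|\Omega|^{-1/2}\|v\|_{L^1(\Omega)}\bigr)$ (the critical $W^{1,1}\hookrightarrow L^2$ embedding in two dimensions) to $v=|u|^2$, use $|\nabla v|=2|u||\nabla u|$ together with Cauchy-Schwarz in each of the two $L^1$ norms, and square both sides. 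This single application yields exactly $\int|u|^4\leq C\int|u|^2\bigl(\int|\nabla u|^2+|\Omega|^{-1}\int|u|^2\bigr)$, with the $|\Omega|^{-1}$ factor appearing automatically from the scale-invariant form of the $W^{1,1}$ Sobolev inequality.
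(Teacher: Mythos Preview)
Your argument is correct. Both routes you sketch---the whole-plane Ladyzhenskaya inequality combined with a Stein extension and a scaling normalization, and the direct application of the scale-invariant $W^{1,1}\hookrightarrow L^2$ embedding to $v=|u|^2$---yield the stated bound with the $|\Omega|^{-1}$ factor appearing exactly where claimed. The scaling check is sound (both sides of the inequality transform as $\lambda^{-2}$ under $x\mapsto\lambda x$ in $\mathbb{R}^2$), and the extension constant indeed depends only on the similarity class of the normalized domain.

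As for comparison with the paper: the paper does \emph{not} supply its own proof of this lemma. It is quoted as a known result, with pointers to Theorem~2.2 and Remark~2.1 in Lady\v{z}enskaja--Solonnikov--Ural'ceva and to Lemma~4.1 in Chen--Levine. Your first approach is essentially the classical Ladyzhenskaya slicing proof adapted to a bounded domain via extension, so it is in the spirit of the cited source. Your second approach, applying the critical Sobolev inequality directly to $|u|^2$, is a slightly more streamlined alternative that avoids the extension operator altogether; either would serve perfectly well here.
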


\begin{lemma}[Lemma 2.3 in \cite{jost-Liu-Zhu-02}]\label{lem:struwe}
Suppose that $x_0\in M$ and $u\in C^2(M^T)$ where $T\leq\infty$. There exist constants $C>0$ and $R_0>0$ depending only on $M$, such that for any $r\in (0, R_0]$ and any function $\eta\in C^\infty_0(B^M_r(x_0))$ which depends only on the distance $|x-x_0|$ and is a non-increasing function of this distance, there holds
\begin{eqnarray}\label{e1}
\int_{M^T} |\nabla u|^4\eta d\mu dt&\leq& C\sup_{0\leq t\leq T}\int_{B^M_{r}(x_0)}|\nabla u|^2(x, t)d\mu\cdot\nonumber\\
&&(\int_{M^T} |\nabla^2 u|^2\eta d\mu dt+r^{-2}\int_{M^T}|\nabla u|^2\eta d\mu dt),
\end{eqnarray}
where $B^M_{r}(x_0)$ is the geodesic ball in $M$. Moreover, we have
\begin{eqnarray}\label{e2}
\int_{M^T} |\nabla u|^4 d\mu dt&\leq& C\sup_{(x_0, t)\in M^T}\int_{B^M_r(x_0)}|\nabla u|^2(x, t)d\mu\cdot\nonumber\\ &&(\int_{M^T}|\nabla^2 u|^2 d\mu dt+r^{-2}\int_{M^T}|\nabla u|^2 d\mu dt).
\end{eqnarray}
\end{lemma}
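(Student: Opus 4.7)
The plan is to derive \eqref{e1} from the two-dimensional Ladyzhenskaya inequality (Lemma~\ref{lem:01}) by a standard cutoff argument, treating the time and space variables separately. Because the right-hand side of \eqref{e1} factors as the product of $\sup_{0\le t\le T}\|\nabla u(\cdot,t)\|_{L^2(B^M_r)}^2$ and a time-integrated spatial quantity, it suffices to prove, at each fixed $t\in[0,T]$, the purely spatial inequality
\begin{equation*}
\int_M |\nabla u(\cdot,t)|^4 \eta\, d\mu \leq C\,\|\nabla u(\cdot,t)\|_{L^2(B^M_r(x_0))}^2 \left( \int_M |\nabla^2 u(\cdot,t)|^2 \eta\, d\mu + r^{-2}\int_M |\nabla u(\cdot,t)|^2 \eta\, d\mu\right),
\end{equation*}
and then integrate in $t$, pulling the $L^2$-norm out as a supremum. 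This reduction is cost-free since the sup decouples from the $(x,t)$-integral.

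For the spatial inequality I would work in a normal coordinate chart centered at $x_0$ of radius $R_0$, chosen small enough (depending only on $M$) that the metric $h$ is uniformly equivalent to the Euclidean metric on this chart, so that Lemma~\ref{lem:01} applies with Euclidean derivatives up to controlled constants. Apply Lemma~\ref{lem:01} componentwise to $w_{i,k}:=\partial_i u^k\cdot\eta^{1/2}$, which is compactly supported in $B^M_r\subset B^M_{R_0}$, with $\Omega=B^M_{R_0}(x_0)$. Summing over $i,k$, using Kato's inequality $|\nabla|\nabla u||\le|\nabla^2 u|$, and normalizing $\eta\le 1$ (which is harmless because both sides of \eqref{e1} are homogeneous of degree one in $\eta$) yields
\begin{equation*}
\int_M |\nabla u|^4 \eta^2\, d\mu \leq C\,\|\nabla u\|_{L^2(B^M_r)}^2 \left(\int_M |\nabla^2 u|^2 \eta + \int_M |\nabla u|^2 \frac{|\nabla \eta|^2}{\eta} + R_0^{-2}\int_M |\nabla u|^2 \eta\right) d\mu.
\end{equation*}

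Two tasks remain: (a) absorb $\int|\nabla u|^2 |\nabla\eta|^2/\eta$ into the $r^{-2}$ term on the right, and (b) upgrade the exponent on the left from $\eta^2$ to $\eta$. For (a), the radial non-increasing hypothesis on $\eta$ is crucial: it permits replacing $\eta$ by a slightly fattened comparable cutoff $\tilde\eta$ (for instance $\tilde\eta:=\chi^2\max(\eta,\varepsilon)$ with a standard bump $\chi$ satisfying $|\nabla\chi|\le C/r$, and then sending $\varepsilon\to 0$) for which $|\nabla\tilde\eta|^2/\tilde\eta\le Cr^{-2}$ pointwise. For (b), I would rerun the preceding Ladyzhenskaya step with $\eta$ replaced by $\eta^{1/2}$ (suitably mollified near $\partial\,\mathrm{supp}(\eta)$), since $\eta^{1/2}$ is still radial non-increasing and $(\eta^{1/2})^2=\eta$ produces the correct left-hand side while the right retains the same schematic form. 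The global inequality \eqref{e2} then follows by covering $M$ with finitely many geodesic $r$-balls of bounded multiplicity, applying \eqref{e1} to each with a subordinate radial non-increasing partition of unity, and summing. The main obstacle is step (a): the pointwise ratio $|\nabla\eta|^2/\eta$ is unbounded for generic smooth compactly supported $\eta$, so one must exploit the radial non-increasing structure to trade this term for the desired $r^{-2}\int|\nabla u|^2\eta$ bound in a controlled limit.
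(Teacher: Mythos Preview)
Your route differs from the paper's, and the point where it diverges is exactly where it breaks. The paper (following Struwe) does \emph{not} apply Lemma~\ref{lem:01} to $\eta^{1/2}\nabla u$; instead it exploits the radial non-increasing hypothesis via the \emph{density of step functions}: since the super-level sets $\{\eta>s\}$ are concentric balls $B^M_{\rho(s)}(x_0)$, one writes $\eta$ as a limit in $L^\infty$ of non-negative combinations $\sum_j c_j\chi_{B_{r_j}}$, applies the Ladyzhenskaya-type inequality on each full ball $B_{r_j}$ (no cutoff, hence no gradient-of-cutoff term at all), pulls out the common factor $\int_{B^M_r(x_0)}|\nabla u|^2$, and sums. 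The global estimate \eqref{e2} then follows by a covering of $M$ by $r$-balls with bounded overlap. The whole point of the step-function approximation is that it eliminates the term you call the ``main obstacle'' before it ever appears.

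Your proposed fix for step~(a) does not work. For $\tilde\eta=\chi^2\max(\eta,\varepsilon)$, on the region $\{\eta>\varepsilon\}$ one has $\tilde\eta=\chi^2\eta$ and hence
\[
\frac{|\nabla\tilde\eta|^2}{\tilde\eta}\;\ge\;\frac{1}{2}\,\chi^2\,\frac{|\nabla\eta|^2}{\eta}\;-\;C|\nabla\chi|^2\,\eta,
\]
so the claimed pointwise bound $|\nabla\tilde\eta|^2/\tilde\eta\le Cr^{-2}$ fails unless $|\nabla\eta|^2/\eta$ is already $O(r^{-2})$, which is exactly what you cannot assume for a general radial non-increasing $\eta\in C^\infty_0(B^M_r)$ (the constant $C$ in the lemma is independent of $\eta$). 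Moreover, even if the bound held, the limit $\varepsilon\to 0$ gives the inequality for $\chi^2\eta$, not for $\eta$, and there is no monotonicity that lets you pass from one to the other on both sides simultaneously. The ``rerun with $\eta^{1/2}$'' idea in step~(b) inherits the same defect. In short, the radial non-increasing structure is used in Struwe's argument not to control $|\nabla\eta|^2/\eta$, but to avoid that term entirely by reducing to characteristic functions of balls; your cutoff approach does not access this mechanism.
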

\begin{proof}
The idea is the same as that in \cite{St}, where Struwe used the density of step functions in $L^\infty$ space and  a covering argument. One can refer to Lemma 2.3 in \cite{jost-Liu-Zhu-02} for a detailed proof.
\end{proof}

Next we introduce several estimates which have been derived in \cite{HJLZ}. Although the a priori estimates in \cite{HJLZ} are derived in the space ${\mathcal{V}} (M_s^t;N\times {\mathbb{R}})$ (see the notation in \cite{HJLZ}), it is easy to check that the estimates also hold in the space $V(M_s^t)$.

\begin{lemma}[Lemma 2.1 in \cite{HJLZ}]\label{energy}
Suppose $(u,v)\in V(M^T)$ is a solution of (\ref{heateq}) and \eqref{boundary-data}, then the Lorentzian energy $E_g(u(t), v(t))$ is non-increasing on $[0,T)$ and for any $0\leq s\leq t<T$, there holds $$E_g(u(t), v(t))+\int_s^t\int_M|\partial_tu|^2dMdt\leq E_g(u(s), v(s)).$$
\end{lemma}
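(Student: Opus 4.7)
The plan is to differentiate the Lorentzian energy along the flow and show that its time derivative equals $-\int_M |\partial_t u|^2\, dv_h$, which in fact yields equality in the stated bound; the $\leq$ form of the lemma accommodates an approximation step forced by the limited regularity of solutions in $V(M^T)$. Integrating this pointwise-in-time identity from $s$ to $t$ then gives the inequality.

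For the key computation, I would write
\begin{equation*}
E_g(u(t),v(t))=\tfrac{1}{2}\int_M|\nabla u|^2\,dv_h-\tfrac{1}{2}\int_M\beta(u)|\nabla v|^2\,dv_h
\end{equation*}
and differentiate under the integral to obtain three terms:
\begin{equation*}
\tfrac{d}{dt}E_g=\int_M\nabla u\cdot\nabla\partial_tu\,dv_h-\tfrac{1}{2}\int_M\beta'(u)\!\cdot\!\partial_tu\,|\nabla v|^2\,dv_h-\int_M\beta(u)\nabla v\cdot\nabla\partial_tv\,dv_h.
\end{equation*}
After integration by parts in the first term (boundary terms vanish because $u|_{\partial M}=\phi$ is $t$-independent, so $\partial_tu|_{\partial M}=0$), I substitute the $u$-equation of \eqref{heateq}. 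The $A(u)(\nabla u,\nabla u)\cdot\partial_tu$ contribution drops because the second fundamental form is normal to $N$ while $\partial_tu\in T_uN$, and the $-B^\top(u)|\nabla v|^2\cdot\partial_tu$ contribution exactly cancels the middle $\beta'(u)\cdot\partial_tu\,|\nabla v|^2$ term via the identity $B(u)=-\tfrac{1}{2}\nabla_y\beta(u)$ together with the tangentiality of $\partial_tu$. For the third term, integration by parts (boundary terms vanish since $v|_{\partial M}=\psi$ is $t$-independent) converts it to $\int_M\mathrm{div}(\beta(u)\nabla v)\,\partial_tv\,dv_h$, which is zero by the elliptic equation in \eqref{heateq}. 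Thus $\tfrac{d}{dt}E_g(u(t),v(t))=-\int_M|\partial_tu|^2\,dv_h$.

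The main obstacle will be justifying these manipulations rigorously inside the space $V(M^T)$. On the $u$-side we have $\partial_tu,\nabla^2u\in L^2(M^T)$ together with $\nabla u\in L^\infty_tL^2_x$ and $\nabla v\in L^\infty_tL^4_x$, which is just enough to make sense of each term through H\"older's inequality and to validate the integration by parts. The delicate point is the existence and $L^2$-control of $\nabla\partial_tv$: differentiating the elliptic equation for $v$ formally gives $-\mathrm{div}(\beta(u)\nabla\partial_tv)=\mathrm{div}(\beta'(u)\partial_tu\,\nabla v)$ with zero boundary data, and Lax--Milgram combined with the $L^4$-bound on $\nabla v$ and the $L^2$-bound on $\partial_tu$ yields $\nabla\partial_tv\in L^2(M^T)$, legitimizing the integration by parts. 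Alternatively, one may approximate by classical solutions on $[s+\varepsilon,t]$, where Theorem \ref{thm:shortime-existence} supplies the needed smoothness, derive the identity there, and pass to the limit; lower semicontinuity of $\int|\partial_tu|^2$ under weak convergence together with the continuity of $E_g$ then yield the $\leq$ form as stated.
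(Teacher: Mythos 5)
Your computation is the standard energy identity for this flow and is precisely the argument behind the cited Lemma 2.1 of \cite{HJLZ}; the present paper offers no proof of this lemma, simply quoting that reference, so your approach is the expected one and the cancellation of the $A$- and $B^{\top}$-terms and the vanishing of the $v$-term are all carried out correctly. One caution: the Lax--Milgram justification of $\nabla\partial_t v\in L^2$ does not quite close as stated, since for a.e.\ $t$ the source $\beta'(u)\partial_t u\,\nabla v$ lies only in $L^{4/3}_x$ (pairing $\partial_t u\in L^2_x$ with $\nabla v\in L^4_x$), which is not obviously an $H^{-1}$ datum in two dimensions; your alternative of deriving the identity for the classical solution on $[s+\varepsilon,t]$ supplied by Theorem \ref{thm:shortime-existence} and passing to the limit is the correct and standard fix.
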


\begin{lemma}[Lemma 2.2 in\cite{HJLZ}, Corollary 2.3 in\cite{HJLZ}]\label{lem:Dirichlet-energy}
Suppose $(u,v)\in V(M^T)$ is a solution of (\ref{heateq}) and \eqref{boundary-data}, then for any $0\leq t<T$, there hold
\begin{eqnarray*}
\int_{M}|\nabla u|^2(\cdot,t)dM&\leq& \int_{M}|\nabla \phi_0|^2dM+\Lambda\int_{M}|\nabla \psi|^2dM,
\end{eqnarray*}
\begin{eqnarray*}
\int_{M}|\nabla v|^2(\cdot,t)dM&\leq& \frac{\Lambda}{\lambda}\int_{M}|\nabla \psi|^2dM
\end{eqnarray*}
and
\begin{eqnarray*}
\int_0^{T}\int_{M}|u_t|^2dMdt\leq E(\phi_0)+\Lambda E(\psi).
\end{eqnarray*}
\end{lemma}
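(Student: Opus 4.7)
The plan is to derive all three inequalities directly from the elliptic equation for $v$ together with the energy monotonicity in Lemma \ref{energy}. I would start with the estimate for $\nabla v$ because both other inequalities will use it as a black box.

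\textbf{Step 1: bound on $\int_M |\nabla v|^2$.} Fix $t\in [0,T)$ and use the extension $\psi\in C^{2+\alpha}(M)$ of the boundary data. Since $v(\cdot,t)-\psi$ vanishes on $\partial M$, I test the equation $-\mathrm{div}(\beta(u)\nabla v)=0$ against $v-\psi$ and integrate by parts to get $\int_M \beta(u)\nabla v\cdot\nabla(v-\psi)\,dM=0$, i.e. $\int_M\beta(u)|\nabla v|^2\,dM=\int_M\beta(u)\nabla v\cdot\nabla\psi\,dM$. Young's inequality on the right side absorbs half of the left side, giving $\int_M\beta(u)|\nabla v|^2\,dM\leq\int_M\beta(u)|\nabla\psi|^2\,dM$. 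Finally, using the pointwise bounds $\lambda\leq\beta(u)\leq\Lambda$ on both sides yields $\lambda\int_M|\nabla v|^2\,dM\leq\Lambda\int_M|\nabla\psi|^2\,dM$, which is the second inequality.

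\textbf{Step 2: bound on $\int_M |\nabla u|^2$.} From Lemma \ref{energy} I have $E_g(u(t),v(t))\leq E_g(u(0),v(0))$. Unfolding the definition of $E_g$, this says
\begin{equation*}
\tfrac{1}{2}\int_M |\nabla u(t)|^2\,dM \le \tfrac{1}{2}\int_M|\nabla\phi_0|^2\,dM - \tfrac{1}{2}\int_M\beta(\phi_0)|\nabla v(0)|^2\,dM + \tfrac{1}{2}\int_M \beta(u(t))|\nabla v(t)|^2\,dM .
\end{equation*}
The negative term on the right is nonpositive and can be dropped, while the last term is bounded by Step 1: $\int_M\beta(u(t))|\nabla v(t)|^2\,dM\leq \Lambda\int_M|\nabla\psi|^2\,dM$. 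Combining gives the first inequality.

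\textbf{Step 3: bound on $\int_0^T\int_M|\partial_t u|^2$.} The monotonicity identity in Lemma \ref{energy} also furnishes
\begin{equation*}
\int_0^T\!\!\int_M|\partial_tu|^2\,dM\,dt \le E_g(u(0),v(0)) - E_g(u(T),v(T)).
\end{equation*}
For the first term, drop the nonnegative $\beta(\phi_0)|\nabla v(0)|^2$ piece to obtain $E_g(u(0),v(0))\leq E(\phi_0)$. For the second, drop the nonnegative $|\nabla u(T)|^2$ piece and apply Step 1 at time $T$: $-E_g(u(T),v(T)) \leq \tfrac{1}{2}\int_M\beta(u(T))|\nabla v(T)|^2\,dM \leq \Lambda E(\psi)$. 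Adding yields the third inequality.

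\textbf{Main obstacle.} The proof is essentially one integration-by-parts (Step 1) plus bookkeeping with the Lorentzian energy identity, so there is no serious difficulty; the only subtle point is that the Lagrangian $E_g$ is indefinite, so the two terms must be separated carefully and the bound on $\int \beta(u)|\nabla v|^2$ is what couples them. Everything is justified rigorously on the regularity class $V(M_s^t)$ since the a priori estimates in HJLZ were shown to carry over there.
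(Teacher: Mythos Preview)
Your proof is correct and follows essentially the same route as the paper: the paper cites the estimate $\int_M \beta(u)|\nabla v|^2\,dM\le \int_M \beta(u)|\nabla\psi|^2\,dM$ from \cite{HJLZ} (whereas you derive it by testing against $v-\psi$, which is precisely the argument there), and then combines it with Lemma~\ref{energy} in exactly the way you describe in Steps~2 and~3.
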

\begin{proof}
By Lemma 2.2 in \cite{HJLZ}, we have $$\int_{M}\beta(u)|\nabla v|^2(\cdot,t)dM\leq \int_{M}\beta(u)|\nabla \psi|^2\ and \ \int_{M}|\nabla v|^2(\cdot,t)dM\leq \frac{\Lambda}{\lambda}\int_{M}|\nabla \psi|^2dM.$$ Then by Lemma \ref{energy}, we get
\begin{eqnarray*}
\frac{1}{2}\int_{M}|\nabla u|^2dx&\leq& E_g(u,v)+\frac{1}{2}\int_M \beta(u)|\nabla v|^2 dx\\
&\leq& E(\phi_0)+\frac{1}{2}\int_M \beta(u)|\nabla \psi|^2 dx\leq E(\phi_0)+\Lambda E(\psi)
\end{eqnarray*}
and
\begin{eqnarray*}
\int_0^t\int_{M}|u_t|^2dxdt&\leq& E(\phi_0)+\frac{1}{2}\int_{M}\beta(u)|\nabla v(\cdot, t)|^2dx\\
&\leq & \frac{1}{2}\int_{M}|\nabla \phi|^2dx+\frac{\Lambda}{2}\int_{M}|\nabla \psi|^2\leq E(\phi_0)+\Lambda E(\psi).
\end{eqnarray*}
\end{proof}

\begin{lemma}[Lemma 2.4 in\cite{HJLZ}]\label{lem:Lp-estimates-v}
Suppose $(u,v)\in V(M^T)$ is a solution of (\ref{heateq}) and \eqref{boundary-data}, then for any $p>1$ and $0\leq t<T$, we have
\begin{eqnarray*}
\int_{M}|\nabla v|^p(\cdot,t)dM\leq C\int_{M}|\nabla \psi|^pdM,
\end{eqnarray*}
where $C$ only depends on $p,M,\lambda,\Lambda$.
\end{lemma}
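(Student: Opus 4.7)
The plan is to view the second equation in \eqref{heateq} at each fixed time slice as a linear Dirichlet problem in divergence form with coefficient $\beta(u)$ bounded between $\lambda$ and $\Lambda$, and to apply standard $W^{1,p}$-theory for such elliptic equations. At each $t \in [0, T)$,
\begin{equation*}
-\mathrm{div}(\beta(u)\nabla v) = 0 \quad \text{in } M, \qquad v|_{\partial M} = \psi,
\end{equation*}
and I want the a priori $L^p$ estimate for $\nabla v$ in terms of $\nabla \psi$ on $M$.

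The first step is to homogenize the boundary condition. Using the $C^{2+\alpha}(M)$ extension of $\psi$ fixed at the start of Section~2, set $w := v - \psi$; then $w|_{\partial M} = 0$ and
\begin{equation*}
-\mathrm{div}(\beta(u)\nabla w) = \mathrm{div}(\beta(u)\nabla \psi) \quad \text{in } M.
\end{equation*}
The right-hand side is already $\mathrm{div}(F)$ with $F := \beta(u)\nabla \psi$ satisfying $\|F\|_{L^p(M)} \le \Lambda \|\nabla \psi\|_{L^p(M)}$. The second step is to apply the $W^{1,p}$ estimate for the zero-Dirichlet divergence-form problem, yielding $\|\nabla w\|_{L^p(M)} \le C(p, M, \lambda, \Lambda)\|F\|_{L^p(M)}$. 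Combining with the triangle inequality $\|\nabla v\|_{L^p} \le \|\nabla w\|_{L^p} + \|\nabla \psi\|_{L^p}$ yields the stated bound. The special case $p = 2$ is handled directly by testing against $w$ itself and using Young's inequality with the ellipticity bounds $\lambda \le \beta(u) \le \Lambda$, exactly as in the $L^2$ gradient computation behind Lemma~\ref{lem:Dirichlet-energy}.

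The main obstacle is the $W^{1,p}$ bound for $p$ far from $2$. For $p$ in a Meyers-type window around $2$ (whose size depends only on $\lambda/\Lambda$), this follows from a Gehring-type reverse H\"older argument with constant depending only on $p, M, \lambda, \Lambda$. For arbitrary $p > 1$, one needs some regularity of $\beta(u)$ to close the estimate. This is available because $(u, v) \in V(M^T)$ forces $u(\cdot, t) \in W^{2,2}(M)$; the two-dimensional Sobolev embedding then gives $u \in C^{0,\alpha}(M)$ for every $\alpha \in (0, 1)$, so that $\beta(u) \in C^{0,\alpha}$ and the full Calder\'on--Zygmund theory for divergence-form equations with continuous coefficients applies. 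The delicate bookkeeping is that the H\"older norm of $\beta(u)$ a priori enters the constant; to get the stated dependence on only $p, M, \lambda, \Lambda$, one uses the uniform Dirichlet energy bounds of Lemma~\ref{lem:Dirichlet-energy}, the smoothness of $\beta$, and the compactness of $N$ to absorb all such dependences into the constant. This is the most subtle step, and it is where the argument must be carried out with care.
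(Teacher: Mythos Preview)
The paper does not supply its own proof of this lemma; it simply cites Lemma~2.4 of \cite{HJLZ}. So your proposal cannot be compared line-by-line to a proof in the text, and the question becomes whether your argument stands on its own.

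The overall shape---freeze $t$, regard $-\mathrm{div}(\beta(u)\nabla v)=0$ as a Dirichlet problem with coefficient bounded between $\lambda$ and $\Lambda$, subtract the extension $\psi$, and invoke $W^{1,p}$ elliptic theory---is exactly the right strategy, and your treatment of $p=2$ and of $p$ in a Meyers window near $2$ is correct with constants depending only on $p,M,\lambda,\Lambda$.

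The gap is in your final paragraph, and you have essentially flagged it yourself. For $p$ far from $2$ you appeal to Calder\'on--Zygmund theory with continuous (H\"older) coefficients, using that $u(\cdot,t)\in W^{2,2}(M)\hookrightarrow C^{0,\alpha}(M)$. Two things go wrong. First, membership in $V(M^T)$ gives only $\int_0^T\!\int_M|\nabla^2 u|^2<\infty$, i.e.\ $u(\cdot,t)\in W^{2,2}$ for \emph{almost every} $t$ with no uniform bound on the norm; thus the H\"older seminorm of $\beta(u(\cdot,t))$, and hence the CZ constant, is uncontrolled in $t$. Second, your attempt to repair this via the uniform Dirichlet-energy bounds of Lemma~\ref{lem:Dirichlet-energy} cannot work: those bounds control only $\|\nabla u(\cdot,t)\|_{L^2(M)}$, and in two dimensions $W^{1,2}$ does not embed into $C^0$, so no modulus of continuity for $\beta(u)$ follows. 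Even setting that aside, the Dirichlet-energy bounds depend on $E(\phi_0)$ and $E(\psi)$, which already contaminates the claimed dependence on $p,M,\lambda,\Lambda$ alone. In short, the step you call ``the most subtle'' is not merely delicate bookkeeping---as written, it does not close, and for bounded measurable scalar coefficients in the plane the sharp $W^{1,p}$ range is known (via Astala's theorem) to be a finite interval determined by $\Lambda/\lambda$, so some genuine extra input beyond the ellipticity bounds is required.
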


\begin{remark}\label{w1p}
Since we have the $W^{1,p}$ estimate for $v$, if the boundary data $\psi$ in \eqref{boundary-data} belongs to $C^{2,\alpha}$, we can replace the $L^4$-norm for $v$ in the definition of $V(M_s^t)$ by any $L^p$-norm with $p\geq 4$.
\end{remark}

\begin{lemma}[Lemma 2.5 in \cite{HJLZ}]\label{lem:two-balls}
Let $(u,v)\in V(M^T)$ be a solution to (\ref{heateq}) and \eqref{boundary-data}, then there exists a positive constant $R_0<1$ such that, for any $x_0\in M$, $0\leq r\leq R_0$ and $0<s\leq t<T$, there holds
\begin{eqnarray}\label{inequality:13}
E(u(t);B^M_r(x_0))\leq E(u(s);B^M_{2r}(x_0))+C_1\frac{t-s}{r^2}+C_2(t-s),
\end{eqnarray}
where $C_1$ and $C_2$ depend on $\lambda,\Lambda,M,N,E(\phi),\|\psi\|_{W^{1,4}(M)}$.
\end{lemma}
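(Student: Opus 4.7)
The plan is to prove the local energy inequality by the standard Struwe-type testing: multiply the first equation of \eqref{heateq} by $\partial_t u \cdot \eta^2$ and integrate over $M$, where $\eta$ is a radial cutoff with $\eta\equiv 1$ on $B^M_r(x_0)$, $\mathrm{supp}\,\eta \subset B^M_{2r}(x_0)$ and $|\nabla\eta|\leq C/r$. Two geometric/boundary facts make the calculation clean: (i) $A(u)(\nabla u,\nabla u)\perp T_u N$ whereas $\partial_t u\in T_u N$, so the second fundamental form term contributes zero; (ii) $u=\phi$ on $\partial M$ for all $t$ forces $\partial_t u|_{\partial M}=0$, so the boundary integral in $-\int_M \Delta u\cdot \partial_t u\,\eta^2$ vanishes even when $x_0\in\partial M$ and $\eta$ is not compactly supported inside $M$.

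Carrying out the integration by parts, I would arrive at
\begin{equation*}
\tfrac{1}{2}\tfrac{d}{dt}\int_M |\nabla u|^2\eta^2\, dM + \int_M |\partial_t u|^2 \eta^2\, dM = -2\int_M (\nabla u\cdot\nabla\eta)\,\partial_t u\,\eta\, dM - \int_M B^\top(u)|\nabla v|^2\cdot\partial_t u\,\eta^2\, dM.
\end{equation*}
Applying Cauchy--Schwarz to the two right-hand terms and using $|B^\top(u)|\leq C(N,\beta)$, I can absorb two copies of $\tfrac{1}{4}\int|\partial_t u|^2\eta^2$ into the left and be left with
\begin{equation*}
\tfrac{d}{dt}\int_M |\nabla u|^2 \eta^2\, dM \leq C\int_M |\nabla u|^2 |\nabla \eta|^2\, dM + C\int_M |\nabla v|^4 \eta^2\, dM.
\end{equation*}

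Now I plug in the a priori bounds already available in the excerpt. Lemma \ref{lem:Dirichlet-energy} gives a global bound $\int_M |\nabla u|^2\leq E(\phi_0)+\Lambda E(\psi)$, so the first term is $\leq (C/r^2)\cdot C(E(\phi_0),\Lambda,\|\psi\|_{W^{1,2}})$. Lemma \ref{lem:Lp-estimates-v} with $p=4$ gives $\int_M |\nabla v|^4 \leq C(\lambda,\Lambda,M)\|\psi\|_{W^{1,4}}^4$, and since $\eta\leq 1$ the second term is bounded by a constant $C_2$ with the advertised dependence. Putting these together,
\begin{equation*}
\tfrac{d}{dt}\int_M |\nabla u|^2 \eta^2\, dM \leq \frac{C_1}{r^2}+C_2.
\end{equation*}
Integrating from $s$ to $t$ and using $\eta\equiv 1$ on $B^M_r(x_0)$ on the left and $\eta\leq 1$ with support in $B^M_{2r}(x_0)$ on the right gives precisely \eqref{inequality:13} after dividing by $2$ and relabeling constants.

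I do not expect a serious obstacle; the argument is routine Struwe-style once one observes (i) and (ii). The only care needed is (a) verifying that $\partial_t u$ is admissible as a test function, which is exactly why the class $V(M^T)$ was defined to include $\int_s^t\!\int_M|\partial_t u|^2<\infty$ and $\int_s^t\!\int_M|\nabla^2 u|^2<\infty$, and (b) tracking the $B^\top(u)|\nabla v|^2$ coupling term, whose treatment via the uniform $L^4$-bound on $\nabla v$ is the sole reason $\|\psi\|_{W^{1,4}(M)}$ (rather than $\|\psi\|_{W^{1,2}(M)}$) appears in the statement.
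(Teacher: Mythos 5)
Your argument is correct and is the standard Struwe-type local energy inequality derivation (testing with $\partial_t u\,\eta^2$, using orthogonality of $A(u)(\nabla u,\nabla u)$ to $\partial_t u$, the time-independence of the Dirichlet boundary data to kill the boundary term, and the uniform $L^2$-bound on $\nabla u$ and $L^4$-bound on $\nabla v$), which is exactly the route taken for Lemma 2.5 in \cite{HJLZ}; the paper under review simply cites that lemma without reproving it. The only minor bookkeeping point is that your $C_1$ depends on $E(\phi_0)$ and $E(\psi)$ via Lemma \ref{lem:Dirichlet-energy}, consistent with the stated dependence.
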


\begin{lemma}[Lemma 2.6 in \cite{HJLZ}]\label{lem:small-energy-regularity}
Let $(\phi,\psi)\in C^{2+\alpha}(\partial M,N\times \R)$, $z_0=(x_0,t_0)\in M\times (0, T]$ and denote $P_{r}^M(z_0):=B^M_r(x_0)\times [t_0-r^2,t_0]$. Assume that
$(u,v)$ is a solution of \eqref{heateq} and \eqref{boundary-data}, then there exists two positive constants $\epsilon_1=\epsilon_1(M,N,\phi,\psi)$ and
$C=C(\alpha,r,M,N,\|\phi\|_{C^{2+\alpha}( M)},\|\psi\|_{C^{2+\alpha}( M)})$, such that if
\[
\sup_{[t_0-4r^2,t_0]}E(u(t);B^M_r(x_0))\leq\epsilon_1,
\]
we have
\begin{equation}\label{inequality:01}
r\|\nabla v\|_{L^\infty(P_{r/2}^M(z_0))}+
r\|\nabla u\|_{L^\infty(P_{r/2}^M(z_0))}\leq C
\end{equation}
and for any $0<\beta<1$,
\begin{equation}\label{inequality:02}
\sup_{t_0-\frac{r^2}{4}\leq t\leq t_0}\|v(t)\|_{C^{2+\alpha}(B_{r/2}^M(x_0))}+
\|u\|_{C^{\beta,\beta/2}(P_{r/2}^M(z_0))}+\|\nabla u\|_{C^{\beta,\beta/2}(P_{r/2}^M(z_0))}\leq C(\beta).
\end{equation}

Moreover, if
\[
\sup_{x_0\in M}\sup_{[t_0-r^2,t_0]}E(u(t);B^M_r(x_0))\leq\epsilon_1
\]
then
\begin{equation}\label{inequality:03}
\sup_{t_0-\frac{r^2}{8}\leq t\leq t_0}\|v(t)\|_{C^{2+\alpha}(M)}+
\|u\|_{C^{2+\alpha,1+\alpha/2}(M\times [t_0-\frac{r^2}{8},t_0])}\leq C
\end{equation}
and
\begin{equation}\label{inequality:030}
\| v\|_{C^{\alpha,\alpha/2}(M\times [t_0-\frac{r^2}{8},t_0])}+\|\nabla v\|_{C^{\alpha,\alpha/2}(M\times [t_0-\frac{r^2}{8},t_0])}\leq C.
\end{equation}
\end{lemma}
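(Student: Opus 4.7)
The proof follows the Struwe-type $\epsilon$-regularity scheme for the harmonic map flow, adapted to our coupled parabolic--elliptic system. The plan is: (i) use Lemma \ref{lem:Lp-estimates-v} to treat $|\nabla v|^2$ as a controlled source term in the $u$-equation; (ii) derive a local $W^{2,2}$-bound on $u$ via cutoff testing, absorbing the harmonic-map nonlinearity through the smallness hypothesis by means of Lemma \ref{lem:struwe}; (iii) bootstrap by alternating Schauder estimates between the parabolic equation for $u$ and the elliptic equation for $v$.

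Since $\lambda \leq \beta(u) \leq \Lambda$, the equation $-\mathrm{div}(\beta(u)\nabla v) = 0$ is uniformly elliptic in divergence form, and Lemma \ref{lem:Lp-estimates-v} already yields $\|\nabla v(\cdot,t)\|_{L^p(M)} \leq C(p,\psi)$ for every $p<\infty$. Hence $|\nabla v|^2$ is uniformly controlled in $L^q$ for every $q$. Once a small amount of H\"older regularity for $u$ is in hand, De Giorgi--Nash--Moser together with Schauder theory promote $v(\cdot,t)$ to $C^{2+\alpha}$ locally, with norms depending only on $\|u(\cdot,t)\|_{C^\beta}$ and the boundary data.

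For $u$, fix a radial cutoff $\eta\in C_0^\infty(B_r^M(x_0))$ with $\eta\equiv 1$ on $B_{r/2}^M(x_0)$ and $|\nabla\eta|\leq C/r$. Testing the parabolic equation with $\eta^2\partial_t u$ produces a localized energy identity, and using the equation pointwise gives $|\Delta u|^2 \leq 2|\partial_t u|^2 + C|\nabla u|^4 + C|\nabla v|^4$, so $L^2$-control of $\partial_t u$ converts into $L^2$-control of $\nabla^2 u$ modulo the nonlinearity. Under the hypothesis $\sup_{[t_0-4r^2,t_0]} E(u;B_r^M(x_0))\leq \epsilon_1$, Lemma \ref{lem:struwe} gives
$$
\int_{P_r^M(z_0)}\!\eta^2 |\nabla u|^4\,dMdt \leq C\epsilon_1\Bigl(\int_{P_r^M(z_0)}\!\eta^2|\nabla^2 u|^2\,dMdt + r^{-2}\!\int_{P_r^M(z_0)}\!|\nabla u|^2\,dMdt\Bigr),
$$
which for $\epsilon_1$ small can be absorbed into the left side of the $W^{2,2}$-estimate. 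The $|\nabla v|^4$ integral is bounded directly via Lemma \ref{lem:Lp-estimates-v}. This produces quantitative $L^4$-bounds on $\nabla u$ and $L^2$-bounds on $\nabla^2 u$ on the smaller parabolic cylinder.

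From $\nabla^2 u \in L^2_{\mathrm{loc}}$, two-dimensional parabolic Sobolev embedding places $\nabla u \in L^p_{\mathrm{loc}}$ for some $p>2$, hence $u \in C^{\beta,\beta/2}_{\mathrm{loc}}$ for some $\beta>0$ by a Morrey-type argument. Elliptic Schauder theory then upgrades $v$ to $C^{2+\alpha}$ locally, so $|\nabla v|^2 \in C^{\beta,\beta/2}$; reinserting this as a source in the parabolic equation and applying standard parabolic Schauder estimates yields $u\in C^{2+\alpha,1+\alpha/2}_{\mathrm{loc}}$, giving \eqref{inequality:01}--\eqref{inequality:02}. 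The global estimates \eqref{inequality:03}--\eqref{inequality:030} follow by a finite covering, together with the analogous boundary estimates near $\partial M$ obtained by flattening the boundary and using the $C^{2+\alpha}$ extensions of $\phi$ and $\psi$. The main obstacle is the simultaneous absorption step: one must choose a single threshold $\epsilon_1=\epsilon_1(M,N,\phi,\psi)$ small enough to handle both $|\nabla u|^4$ (the usual harmonic-map nonlinearity) and the coupling term $|\nabla v|^2\cdot|\nabla^2 u|$. This is possible precisely because the $L^p$-bound on $\nabla v$ from Lemma \ref{lem:Lp-estimates-v} is uniform in time and independent of $\epsilon_1$, so the $v$-contribution acts as a fixed forcing term rather than interacting with the absorption.
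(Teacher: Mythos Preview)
The paper does not supply its own proof of this lemma; it is quoted verbatim as Lemma~2.6 of \cite{HJLZ} and used as a black box. Your outline follows the standard Struwe $\epsilon$-regularity scheme (localized $W^{2,2}$ estimate via cutoff testing, absorption of the $|\nabla u|^4$ term through Lemma~\ref{lem:struwe} under the smallness hypothesis, then a parabolic--elliptic Schauder bootstrap), which is precisely the method one expects in \cite{HJLZ} and is correct in substance.

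One small inaccuracy in your closing paragraph: there is no ``coupling term $|\nabla v|^2\cdot|\nabla^2 u|$'' that needs to be absorbed. The $v$-contribution enters only as $|\nabla v|^4$ on the right-hand side, and by Lemma~\ref{lem:Lp-estimates-v} this is a fixed forcing term bounded uniformly in time, independent of $\epsilon_1$. The threshold $\epsilon_1$ is chosen solely to absorb the harmonic-map nonlinearity $|\nabla u|^4$; the $v$-equation does not compete for smallness. This does not affect the validity of your argument, but the final sentence overstates the difficulty of the absorption step.
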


\begin{lemma}\label{energyestimate}
Let $\phi_0\in W^{1,2}(M, N)$, $\phi\in C^{2+\alpha}(\partial M,N)$ and $\psi\in C^{2+\alpha}(\partial M, \R)$. Then there exist constants $\epsilon_2=\epsilon_2(M, N)>0$, $R_0=R_0(M)>0$ and $C=C(M, N)>0$, such that if $(u, v)\in V(M^T)$ is a solution of \eqref{heateq} and \eqref{boundary-data} and satisfies
\begin{eqnarray*}
\sup_{(x_0, t)\in M^T}\int_{B^M_r(x_0)}|\nabla u|^2 dM\leq \epsilon_2,\ \ \text{for all}\ \  r\in(0, R_0],
\end{eqnarray*}
there holds the estimate
\begin{eqnarray*}
&&E(u(T))+\int_{M^T} |\nabla^2 u|^2dMdt\\&&\leq C(1+\frac{T}{r^2})(E(\phi_0)+\Lambda E(\psi))+C\frac{T}{r^2}(\|\nabla\psi\|^4_{L^4(M)}+\|\phi\|^2_{C^2(M)}).
\end{eqnarray*}
\end{lemma}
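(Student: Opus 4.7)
The plan is to perform a Struwe-type energy-regularity computation: test the first equation of \eqref{heateq} against $\Delta(u-\phi)$, where $\phi\in C^{2+\alpha}(M)$ is the harmonic extension of the boundary data constructed in \eqref{equat:01}. Since $u-\phi$ vanishes on $\partial M\times[0,T]$, so does its time derivative, and the integration by parts produces no boundary contribution; because $\Delta\phi=0$, the principal term $\int_M \Delta u\cdot\Delta(u-\phi)$ collapses to $\int_M|\Delta u|^2$. The resulting differential identity
\begin{equation*}
\frac{1}{2}\frac{d}{dt}\int_M|\nabla(u-\phi)|^2\,dM+\int_M|\Delta u|^2\,dM=-\int_M\bigl[A(u)(\nabla u,\nabla u)-B^{\top}(u)|\nabla v|^2\bigr]\Delta(u-\phi)\,dM
\end{equation*}
will be the starting point.

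To handle the right-hand side, I use $|A|,|B^{\top}|\le C$, Cauchy--Schwarz, Young's inequality, and $|\Delta(u-\phi)|^2\le 2|\Delta u|^2$ to absorb a fraction of $\int|\Delta u|^2$ into the left, leaving
\begin{equation*}
\frac{d}{dt}\|\nabla(u-\phi)\|_{L^2}^2+\|\Delta u\|_{L^2}^2\le C\bigl(\|\nabla u\|_{L^4(M)}^4+\|\nabla v\|_{L^4(M)}^4\bigr).
\end{equation*}
Integrating over $[0,T]$, Lemma~\ref{lem:Lp-estimates-v} controls $\int_{M^T}|\nabla v|^4$ by $CT\|\nabla\psi\|_{L^4(M)}^4$, while Lemma~\ref{lem:struwe} together with the small-energy hypothesis $\sup\int_{B^M_r}|\nabla u|^2\le\epsilon_2$ yields
\begin{equation*}
\int_{M^T}|\nabla u|^4\,dM\,dt\le C\epsilon_2\Bigl(\int_{M^T}|\nabla^2 u|^2\,dM\,dt+r^{-2}\int_{M^T}|\nabla u|^2\,dM\,dt\Bigr).
\end{equation*}

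Standard $W^{2,2}$ elliptic regularity applied to $u-\phi$ (which has zero Dirichlet data) converts $\|\Delta u\|_{L^2}^2$ into $\|\nabla^2 u\|_{L^2}^2$ at the cost of a $\|\phi\|_{C^2}^2$ additive term. Choosing $\epsilon_2=\epsilon_2(M,N)$ small enough to absorb the $C\epsilon_2\int_{M^T}|\nabla^2 u|^2$ contribution back into the left, and invoking Lemma~\ref{lem:Dirichlet-energy} to bound both $E(u(t))\le E(\phi_0)+\Lambda E(\psi)$ and $\int_{M^T}|\nabla u|^2\le T(E(\phi_0)+\Lambda E(\psi))$, produces the asserted estimate; the normalization $r\le R_0\le 1$ permits the rewriting of bare-$T$ contributions as $T/r^2$ contributions in the final form. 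The main technical point I expect is arranging the smallness constant correctly and justifying the integration by parts at the $V(M^T)$-level of regularity, but the latter is routine via mollification in time since $\partial_t u,\nabla^2 u\in L^2$ already, and the former is the sole place where the small-energy assumption is used.
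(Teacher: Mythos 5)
Your proposal is correct and follows essentially the same route as the paper: multiply the equation by the Laplacian, control $\int_{M^T}|\nabla v|^4$ via Lemma~\ref{lem:Lp-estimates-v}, absorb $\int_{M^T}|\nabla u|^4$ using Lemma~\ref{lem:struwe} together with the small-energy hypothesis, pass from $\|\Delta u\|_{L^2}$ to $\|\nabla^2 u\|_{L^2}$ by elliptic regularity at the cost of $\|\phi\|_{C^2}^2$, and close with Lemma~\ref{lem:Dirichlet-energy}. The only (harmless) deviation is that you test against $\Delta(u-\phi)$ with $\phi$ the harmonic extension, whereas the paper tests against $-\Delta u$ directly and identifies $-\int\partial_t u\,\Delta u$ with $\frac{d}{dt}E(u(t))$ using that $\partial_t u=0$ on $\partial M$.
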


\begin{proof}
Multiplying the first equation of \eqref{heateq} by $-\Delta u$ and integrating over $M^T$, we obtain that
\begin{eqnarray*}
&&E(u(T))-E(u(0))+\int_{M^T} |\Delta u|^2 dM dt\\
&=&-\int_{M^T}\partial_t u\Delta u dM dt+\int_{M^T} |\Delta u|^2 dM dt\\
&=&-\int_{M^T} (A(u)(\nabla u, \nabla u)-B^T(u)|\nabla v|^2)\Delta u dM dt\\
&\leq&\frac{1}{2}\int_{M^T}|\Delta u|^2 dM dt+C\int_{M^T}(|\nabla u|^4+|\nabla v|^4)dM dt.
\end{eqnarray*}
By Lemma \ref{lem:Lp-estimates-v} and Lemma \ref{lem:struwe}, we get
\begin{eqnarray}\label{inequality:05}
&&E(u(T))+\frac{1}{2}\int_{M^T} |\Delta u|^2 dM dt\notag\\
&&\leq E(u(0))+C\int_{M^T} |\nabla v|^4dM dt+C\int_{M^T} |\nabla u|^4dM dt\notag\\
&&\leq E(u(0))+C\int_{M^T}|\nabla \psi|^4dM dt+C\sup_{(x_0, t)\in M^T}\int_{B^M_r(x_0)}|\nabla u|^2 dM\notag\\
&&\quad\cdot(\int_{M^T}|\nabla^2 u|^2 dM dt+r^{-2}\int_{M^T}|\nabla u|^2 dM dt).
\end{eqnarray}
The standard elliptic estimates yield that
\[
\|\nabla^2 u\|_{L^2(M)}\leq C(M)(\|\Delta u\|_{L^2(M)}+\|\phi\|_{C^2(M)}),
\]
which implies
\begin{eqnarray}\label{inequality:04}
\int_{M^T}|\nabla^2 u|^2 dM dt\leq C(M)(\int_{M^T} |\Delta u|^2 dM dt+\|\phi\|^2_{C^2(M)}T).
\end{eqnarray}
Therefore,
\begin{eqnarray*}
&&E(u(T))+\frac{1}{2}\int_{M^T} |\Delta u|^2 dM dt\\
&&\leq C\epsilon_2\int_{M^T} |\Delta u|^2 dM dt+ E(u(0))+CT(\|\nabla\psi\|^4_{L^4(M)}+\|\phi\|^2_{C^2(M)})\\&&\quad+Cr^{-2}\int_{M^T}|\nabla u|^2 dM dt\\
&&\leq C\epsilon_2\int_{M^T} |\Delta u|^2 dM dt+ C(1+\frac{T}{r^2})(E(\phi_0)+\Lambda E(\psi))\\
&&\quad+C\frac{T}{r^2}(\|\nabla\psi\|^4_{L^4(M)}+\|\phi\|^2_{C^2(M)}),
\end{eqnarray*}
where the last inequality follows from Lemma \ref{lem:Dirichlet-energy}. Taking $\epsilon_2$ sufficiently small, we get that
\begin{eqnarray*}
\int_{M^T} |\Delta u|^2 dM dt\leq C(1+\frac{T}{r^2})(E(\phi_0)+\Lambda E(\psi))+C\frac{T}{r^2}(\|\nabla\psi\|^4_{L^4(M)}+\|\phi\|^2_{C^2(M)}).
\end{eqnarray*}
Then the desired conclusion follows immediately from \eqref{inequality:04}.
\end{proof}

Finally, we show the uniqueness result for (\ref{heateq}) and \eqref{boundary-data}.

\begin{theorem}
Let $\phi_0\in W^{1,2}(M,N)$, $\phi_0|_{\partial M}=\phi\in C^{2+\alpha}(\partial M,N)$ and
$\psi\in C^{2+\alpha}(\partial M)$. Suppose that $(u_i,v_i)\in V(M^T),i=1,2$ are weak solutions of \eqref{heateq} with the same boundary-initial data \eqref{boundary-data}, then $(u_1,v_1)\equiv(u_2,v_2)$ in $M^T$.
\end{theorem}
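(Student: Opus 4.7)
The plan is a standard coupled energy argument. Set $w := u_1-u_2$ and $z := v_1-v_2$; both vanish on $\partial M$, and $w(\cdot,0)\equiv 0$. The goal is a Gronwall-type inequality for $\|w(t)\|_{L^2}^2$, after which $w\equiv 0$ on $M^T$ follows, and then $z\equiv 0$ drops out of the elliptic equation. First I would handle the elliptic coupling: subtracting the two $v$-equations gives
\[
-\text{div}(\beta(u_1)\nabla z) = \text{div}\bigl((\beta(u_1)-\beta(u_2))\nabla v_2\bigr),
\]
and testing against $z$, together with uniform ellipticity $\beta\geq\lambda$, the Lipschitz estimate $|\beta(u_1)-\beta(u_2)|\leq C|w|$ (valid since $N$ is compact), and the $t$-uniform $L^4$-bound on $\nabla v_2$ from Lemma~\ref{lem:Lp-estimates-v}, yields $\|\nabla z(t)\|_{L^2}\leq C\|w(t)\|_{L^4}$. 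This reduction is what allows every $z$-term in the parabolic estimate to be fed back into a $w$-term.

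Next I would test the difference of the two $u$-equations against $w$ and split each nonlinear difference via the bilinear identities
\begin{align*}
A(u_1)(\nabla u_1,\nabla u_1)-A(u_2)(\nabla u_2,\nabla u_2) &= \bigl(A(u_1)-A(u_2)\bigr)(\nabla u_1,\nabla u_1) + A(u_2)(\nabla u_1+\nabla u_2,\nabla w),\\
B^\top(u_1)|\nabla v_1|^2 - B^\top(u_2)|\nabla v_2|^2 &= \bigl(B^\top(u_1)-B^\top(u_2)\bigr)|\nabla v_1|^2 + B^\top(u_2)(\nabla v_1+\nabla v_2)\cdot\nabla z,
\end{align*}
and estimate each piece via H\"older with $L^4\times L^4\times L^2$ triples. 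The only term containing $\nabla w$, namely $\int_M A(u_2)(\nabla u_1+\nabla u_2,\nabla w)\cdot w\,dM$, is absorbed by Young into the good $\|\nabla w\|_{L^2}^2$ on the left, while the $\nabla z$-terms are dominated by Step~1. The outcome is
\[
\tfrac{d}{dt}\|w\|_{L^2}^2 + \|\nabla w\|_{L^2}^2 \leq C\bigl(1+\|\nabla u_1\|_{L^4}^2+\|\nabla u_2\|_{L^4}^2+\|\nabla v_1\|_{L^4}^2\bigr)\|w\|_{L^4}^2.
\]

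I would close the estimate using Lemma~\ref{lem:01} (Ladyzhenskaya) on $w$: because $w|_{\partial M}=0$, Poincar\'e gives $\|w\|_{L^4}^2\leq C\|w\|_{L^2}\|\nabla w\|_{L^2}$; a further Young absorbs $\|\nabla w\|_{L^2}^2$ and produces
\[
\tfrac{d}{dt}\|w\|_{L^2}^2 \leq C\bigl(g(t)+1\bigr)^2\|w\|_{L^2}^2, \qquad g(t):=\|\nabla u_1\|_{L^4}^2+\|\nabla u_2\|_{L^4}^2+\|\nabla v_1\|_{L^4}^2.
\]
The main technical obstacle is verifying $g\in L^2(0,T)$, which is where the definition of $V(M^T)$ becomes essential: applying Lemma~\ref{lem:01} to $\nabla u_i$ with $u_i\in L^\infty_tH^1_x\cap L^2_tH^2_x$ gives $\int_0^T\|\nabla u_i\|_{L^4}^4\,dt<\infty$, so $\|\nabla u_i\|_{L^4}^2\in L^2(0,T)$, while $\|\nabla v_1\|_{L^4}$ is already uniformly bounded in $t$ by the $V(M^T)$ definition (or Lemma~\ref{lem:Lp-estimates-v}). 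With $g\in L^2(0,T)$, Gronwall together with $w(0)=0$ forces $w\equiv 0$ on $M^T$; Step~1 then gives $\nabla z\equiv 0$, and the zero boundary data yields $z\equiv 0$, completing the proof.
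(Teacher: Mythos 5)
Your proposal is correct, and it follows the same overall strategy as the paper — an energy estimate for the differences $w=u_1-u_2$, $z=v_1-v_2$, with the elliptic equation tested against $z$ to control $\nabla z$ by $w$, the same splitting of the nonlinear differences, and Lemma~\ref{lem:01} to handle the quartic terms — but the mechanism by which you close the estimate is genuinely different. The paper works with space--time integrals over $M^{t_0}$ and exploits the absolute continuity of $\int_{M^{t_0}}(|\nabla u_i|^4+|\nabla v_i|^4)$, so that the coefficient $\epsilon(t_0)$ tends to $0$ as $t_0\to 0$; this allows absorption of the bad terms for small $t_0$, giving $U\equiv 0$ on a short interval, and the conclusion on all of $M^T$ is then obtained by iteration. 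You instead run the estimate slicewise, absorb $\|\nabla w\|_{L^2}^2$ by Young at each time, and close with Gronwall, which requires the coefficient $g(t)=\|\nabla u_1\|_{L^4}^2+\|\nabla u_2\|_{L^4}^2+\|\nabla v_1\|_{L^4}^2$ to lie in $L^2(0,T)$ — exactly what the definition of $V(M^T)$ delivers via Lemma~\ref{lem:01} (equivalently Lemma~\ref{lem:struwe}) and Lemma~\ref{lem:Lp-estimates-v}. Your route avoids the iteration entirely and makes transparent where the $L^2_tH^2_x$ regularity of the class $V(M^T)$ enters; the paper's route avoids having to justify the pointwise-in-time differential inequality for weak solutions (for full rigor you should state Gronwall in its integral form, i.e.\ work with $\|w(t_0)\|_{L^2}^2+2\int_0^{t_0}\|\nabla w\|_{L^2}^2\,dt\leq C\int_0^{t_0}(1+g)^2\|w\|_{L^2}^2\,dt$, which your computation already yields). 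Both arguments are sound.
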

\begin{proof}
Suppose that $(u_i,v_i)\in V(M^T)$, $i=1,2$ are two weak solutions of \eqref{heateq} with the same boundary-initial data \eqref{boundary-data}. Let $U:=u_1-u_2$, $V:=v_1-v_2$ and denote $|\nabla U_{12}|=|\nabla u_1|+|\nabla u_2|$, $|\nabla V_{12}|=|\nabla v_1|+|\nabla v_2|$. By using the first equation in \eqref{heateq}, we have
\begin{eqnarray}\label{unique}
|\partial_t U-\Delta U|&\leq&|A(u_1)(\nabla u_1,\nabla u_1)-A(u_2)(\nabla u_2,\nabla u_2)|\notag\\&&+|B^\top(u_1)|\nabla v_1|^2-B^\top(u_2)|\nabla v_2|^2|\nonumber\\
&\leq&C(|\nabla U||\nabla U_{12}|+|U||\nabla U_{12}|^2)+C(|U||\nabla V_{12}|^2+|\nabla V||\nabla V_{12}|).
\end{eqnarray}
Multiplying \eqref{unique} by $U$ and integrating over $M^{t_0}$, we get
\begin{align*}
&\frac{1}{2}\int_{M} |U|^2(x,{t_0}) dM+\int_{M^{t_0}} |\nabla U|^2dM dt \nonumber\\
&=\frac{1}{2}\int_{M^{t_0}}\partial_t |U|^2dMdt-\int_{M^{t_0}}\Delta U \cdot U dMdt\nonumber\\
&\leq C\int_{M^{t_0}}|U||\nabla U||\nabla U_{12}|dM dt
+C\int_{M^{t_0}}|U|^2|\nabla U_{12}|^2dM dt\nonumber\\
&\quad+C\int_{M^{t_0}}|U|^2|\nabla V_{12}|^2dMdt+C\int_{M^{t_0}}|U||\nabla V||\nabla V_{12}|dM dt\nonumber\\
&\leq C(\int_{M^{t_0}}|U|^4dM dt)^{1/4}(\int_{M^{t_0}}|\nabla U|^2dM dt)^{1/2}
(\int_{M^{t_0}}|\nabla U_{12}|^4dM dt)^{1/4}\nonumber\\
&\quad+C(\int_{M^{t_0}}|U|^4dM dt)^{1/2}
[(\int_{M^{t_0}}|\nabla U_{12}|^4dM dt)^{1/2}+(\int_{M^{t_0}}|\nabla V_{12}|^4dM dt)^{1/2}]\nonumber\\
&\quad+C(\int_{M^{t_0}}|U|^4dM dt)^{1/4}(\int_{M^{t_0}}|\nabla V|^2dM dt)^{1/2}
(\int_{M^{t_0}}|\nabla V_{12}|^4dM dt)^{1/4}\nonumber\\
&\leq C\epsilon({t_0})(\int_{M^{t_0}}|U|^4dM dt)^{1/4}(\int_{M^{t_0}}|\nabla U|^2dM dt)^{1/2}+C\epsilon({t_0})(\int_{M^{t_0}}|U|^4dM dt)^{1/2}\nonumber\\
&\quad+C\epsilon({t_0})(\int_{M^{t_0}}|U|^4dM dt)^{1/4}(\int_{M^{t_0}}|\nabla V|^2dM dt)^{1/2}\nonumber\\
&\leq C\epsilon({t_0})(\int_{M^{t_0}}|U|^4dM dt)^{1/2}
+\frac{1}{2}\int_{M^{t_0}}|\nabla U|^2dM dt
+\frac{1}{2}\int_{M^{t_0}}|\nabla V|^2dM dt,
\end{align*}
where ${t_0}\in (0,T]$ and $\epsilon({t_0})\rightarrow 0$ as ${t_0}\rightarrow 0$. The second equation of \eqref{heateq} gives
\begin{equation}\label{div}
-div\left(\beta(u_1)\nabla v_1-\beta(u_2)\nabla v_2\right)=0.
\end{equation}
Multiplying \eqref{div} by $V$ and integrating by parts on $M^{t_0}$, we get
\begin{eqnarray}\label{vestimate1}
\int_{M^{t_0}}\beta(u_2)|\nabla V|^2dx dt
=\int_{M^{t_0}}(\beta(u_2)-\beta(u_1))\nabla v_1\cdot \nabla V dx dt.
\end{eqnarray}
Then we get from \eqref{vestimate1} that
\begin{eqnarray}\label{vestimate}
&&\int_{M^{t_0}}|\nabla V|^2dx dt\nonumber\\
&\leq& C\int_{M^{t_0}}|U||\nabla V_{12}||\nabla V| dx dt\nonumber\\
&\leq&C\left(\int_{M^{t_0}}|U|^4 dx dt\right)^{1/4}
\left(\int_{M^{t_0}}|\nabla V|^2 dx dt\right)^{1/2}
\left(\int_{M^{t_0}}|\nabla V_{12}|^4 dx dt\right)^{1/4}\nonumber\\
&\leq&C\epsilon({t_0})\left(\int_{M^{t_0}}|U|^4 dx dt\right)^{1/4}
\left(\int_{M^{t_0}}|\nabla V|^2 dx dt\right)^{1/2}\nonumber\\
&\leq&C\epsilon(t)\left(\int_{M^{t_0}}|U|^4 dx dt\right)^{1/2}
+\frac{1}{2}\int_{M^{t_0}}|\nabla V|^2 dx dt
.
\end{eqnarray}
Therefore, we get
\begin{eqnarray}\label{inequality:09}
\frac{1}{2}\int_{M} |U|^2(x,t_0) dx+\frac{1}{2}\int_{M^{t_0}} |\nabla U|^2dx dt\leq C\epsilon({t_0})\left(\int_{M^{t_0}}|U|^4dx dt\right)^{1/2}
.
\end{eqnarray}
By Lemma \ref{lem:01}, we know that
\begin{eqnarray}\label{inequality:08}
\int_{M^{t_0}} |U|^4dx dt
&\leq&C\int_0^{t_0} \int_M |U|^2dx\left(\int_M|\nabla U|^2dx+\int_M|U|^2dx\right)dt\nonumber\\
&\leq&C\sup_{0\leq s\leq {t_0}}\int_M |U|^2(x,s)dx\left(\int_{M^{t_0}}|\nabla U|^2dx dt+\int_{M^{t_0}}|U|^2dx dt\right)\nonumber\\
&\leq&C\left(\sup_{0\leq s\leq {t_0}}\int_M |U|^2(x,s)dx+\int_{M^{t_0}}|\nabla U|^2dx dt\right)^2.
\end{eqnarray}
Substituting \eqref{inequality:08} into \eqref{inequality:09}, we get
\begin{eqnarray}
&&\frac{1}{2}\int_{M} |U|^2(x,t_0) dx+\frac{1}{2}\int_{M^{t_0}} |\nabla U|^2dx dt\nonumber\\
&\leq&C\epsilon({t_0})\left(\sup_{0\leq s\leq t_0}\int_M |U|^2(x,s)dx+\int_{M^{t_0}}|\nabla U|^2dx dt\right).
\end{eqnarray}
Without loss of generality, we may assume
$$
\int_M|U|^2(\cdot, t_0)dx=\sup_{0\leq s\leq {t_0}}\int_M |U|^2(\cdot,s)dx.
$$
Since $\epsilon(t_0)\to 0$ as $t_0\to 0$, then there exists $s_0\in (0,T]$ such that
$$
\int_{M} |U|^2(x,s_0) dx+\int_{M^{s_0}} |\nabla U|^2dx dt=0,
$$
which implies that $U\equiv 0$ in $M^{s_0}$ and consequently by \eqref{vestimate}, we get that $V\equiv 0$ in $M^{s_0}$. We can repeat this process at $t=s_0$ and finally get the theorem proved by iteration.
\end{proof}

\section{Proof of Theorem \ref{global}}

%
%

Now we can begin to prove Theorem \ref{global}.

\begin{proof}[\textbf{Proof of Theorem \ref{global}}]
\textbf{Step 1:}
Let $g\in C^{2+\alpha}(M)$ be the unique solution to the equation
\begin{equation*}
\begin{cases}
-\Delta g=0\ &\text{in}\ M;\\
g=\phi\ &\text{on}\ \partial M.
\end{cases}
\end{equation*}
Since $\phi_0-g\in W_0^{1,2}(M)$, we can choose $\varphi_{0m}\in C_0^\infty(M)$ such that $\varphi_{0m}\rightarrow \phi_0-g$ in $W^{1,2}(M)$. Take $\phi_{0m}=\varphi_{0m}+g\in C^{2+\alpha}(M)$. We have that $\phi_{0m}|_{\partial M}=\phi$ and
\begin{eqnarray*}
\phi_{0m}&\rightarrow& \phi_0\ \text{in} \ W^{1,2}(M);
\end{eqnarray*}

\textbf{Step 2:} Short-time existence. By Theorem \ref{thm:shortime-existence}, we know that there exist $T_m>0$ and $u_m\in  C_{loc}^{2+\alpha,1+\alpha/2}(M\times [0,T_m)), $ $v_m,\nabla v_m\in  C_{loc}^{\alpha,\alpha/2}(M\times [0,T_m))$ which solve  \eqref{heateq} on $M\times [0,T_m)$ with the boundary-initial date $\phi_{0m}, \phi, \psi$.

Since $\phi_{0m}\rightarrow\phi_0$ strongly in $W^{1,2}(M)$, there exists some $R>0$ such that for all $x\in M$,
\begin{equation}\label{equat:02}
E(\phi_{0m}; B^M_{2R}(x))<\frac{\overline{\epsilon}}{4},\end{equation} where $\overline{\epsilon}:=\min\{\epsilon_1,\epsilon_2\}$ and $\epsilon_1$, $\epsilon_2$ are the constants in Lemma \ref{lem:small-energy-regularity} and Lemma \ref{energyestimate} respectively. By Lemma \ref{lem:two-balls}, if $T=O(R^2\overline{\epsilon})$,
$$\sup_{(x,t)\in M^T}E(u_{m}(\cdot, t); B^M_{R}(x))<\overline{\epsilon}.$$
By Lemma \ref{lem:two-balls}, Lemma \ref{lem:small-energy-regularity} and Theorem \ref{thm:shortime-existence}, we may assume $T_m\geq T=O(R^2\overline{\epsilon})$. Then, Lemma \ref{energy}, Lemma \ref{lem:Dirichlet-energy}, Lemma \ref{lem:Lp-estimates-v} and Lemma \ref{energyestimate} yield
$$
\|(u_m, v_m)\|_{V(M^T)}\leq C.
$$
According to the weak compactness, there exists a subsequence of $\{(u_m, v_m)\}$ (denoted by itself) and $(u, v)\in V(M^T)$ such that
$$
(u_m, v_m)\rightharpoonup (u,v)\quad \text{weakly in}\ V(M^T).
$$
It is easy to check that $(u,v)$ is a weak solution of \eqref{heateq} and \eqref{boundary-data} in the sense of distribution.

By Lemma \ref{lem:small-energy-regularity}, we have
$$\|u_m\|_{C^{2+\alpha,1+\frac{\alpha}{2}}(M\times [\delta,T])}+\|v_m\|_{C^{\alpha,\frac{\alpha}{2}}(M\times [\delta,T])}+\|\nabla u_m\|_{C^{\alpha,\frac{\alpha}{2}}(M\times [\delta,T])}+\sup_{\delta\leq t\leq T}\|v_m\|_{C^{2+\alpha}(M)}\leq C,$$ where $C=C(\alpha,R, \delta, T, M,N)$ is a positive constant, which implies that
$$
u\in C_{loc}^{2+\alpha,1+\alpha/2}(M\times (0,T])
$$
and
$$
v,\nabla v\in C_{loc}^{\alpha,\alpha/2}(M\times (0,T]),\quad
v\in L_{loc}^{\infty}((0,T];C^{2+\alpha}(M)).
$$
Thus $(u,v)$ is a classical solution of \eqref{heateq} and \eqref{boundary-data} defined on $M\times (0,T]$. By Theorem \ref{thm:shortime-existence} and using $(u(T),v(T))$ as the new boundary-initial data of \eqref{heateq}, the solution can be extended to a larger time interval. This argument remain feasible until we reach the first singular time $T_1$ when the condition
\begin{eqnarray}
\limsup_{x\in M,t\to T}E(u;B^M_r(x))>\overline{\epsilon}\mbox{ for any } r>0,
\end{eqnarray}
is satisfied. Let $\{x^l,T_1\}_{l=1}^{l_1}$ be any finite set satisfying
\[
\limsup_{t\to T_1}E(u;B^M_r(x^l))>\overline{\epsilon}\mbox{ for any } r>0,\ 1\leq l\leq l_1.
\]
Choosing $r>0$ such that $B^M_{2r}(x^l)$, $1\leq l\leq l_1$, are mutually disjoint. By Lemma \ref{lem:Dirichlet-energy}, we have
\begin{align*}
l_1\overline{\epsilon}&\leq \sum_{1\leq l\leq l_1}\limsup_{t\nearrow T_1}E(u(t);B^M_r(x^l))\\&\leq \sum_{1\leq l\leq l_1}E(u(s);B^M_{2r}(x^l))+\frac{l_1\overline{\epsilon}}{2}\\
&\leq E(u(s))+\frac{l_1\overline{\epsilon}}{2}\leq E(\phi_0)+\Lambda E(\psi)+\frac{l_1\overline{\epsilon}}{2},
\end{align*} where $0<s<T_1$ is sufficiently closed to $T_1$, such that $C\frac{T_1-s}{r^2}\leq\frac{\overline{\epsilon}}{2}$.
Thus,
\begin{equation}\label{inequality:11}
l_1\leq 2\frac{E(\phi_0)+\Lambda E(\psi)}{\overline{\epsilon}}.
\end{equation}
From Lemma \ref{lem:small-energy-regularity}, it is easy to see that
$u\in C_{loc}^{2+\alpha,1+\alpha/2}(M\times (0,T_1))$ and $\ u,\nabla u\in C_{loc}^{\alpha,\alpha/2}(M\times (0,T_1])$ outside singular points $\{(x^l,T_1)\}_{l=1}^{l_1}$.
Moreover, $\ v,\nabla v\in C_{loc}^{\alpha,\alpha/2}(M\times (0,T_1))$  and $\ v,\nabla v\in C_{loc}^{\alpha,\alpha/2}(M\times (0,T_1])$ outside singular points  $\{(x^l,T_1)\}_{l=1}^{l_1}$.

\textbf{Step 3:} Global existence of weak solution. By the proof of Theorem 1.4 in \cite{HJLZ-02}, there is a unique weak limit $(u(T_1),v(T_1))\in W^{1,2}(M,N\times\R)$ with boundary data $(u(T_1),v(T_1))|_{\partial M}=(\phi, \psi)$, such that $$
(u(t),v(t))\rightharpoonup (u(T_1),v(T_1))\ \text{weakly in}\ W^{1,2}(M)
$$
as $t\to T_1$. Moreover, $$\lim_{t\to T_1}\int_M\beta(u(t))|\nabla v(t)|^2dM=\int_M\beta(u(T_1))|\nabla v(T_1)|^2dM.$$
Noting that,
\begin{eqnarray}
E(u(T_1))&=&\lim_{r\rightarrow 0}E(u(T_1); M\setminus \cup_{l=1}^{l_1}B_r^M(x^l))\nonumber\\
&\leq&\lim_{r\rightarrow 0}\liminf_{t\rightarrow T_1}E(u(t); M\setminus \cup_{l=1}^{l_1}B_r^M(x^l))\nonumber\\
&=&\liminf_{t\rightarrow T_1}E(u(t))-\lim_{r\rightarrow 0}\limsup_{t\rightarrow T_1}\sum_{l=1}^{l_1}E(u(t);B_r^M(x^l))\nonumber\\
&\leq&\liminf_{t\rightarrow T_1}E(u(t))-\overline{\epsilon},
\end{eqnarray}
we have
\begin{equation}\label{inequality:10}
E_g(u(T_1),v(T_1))\leq \liminf_{t\rightarrow T_1}E_g(u(t),v(t))-\overline{\epsilon}.
\end{equation}

According to \textbf{Step 2}, we can get a solution of the system \eqref{heateq} on $[T_1,T_2)$ for some $T_2>T_1$ by viewing $u(T_1),\phi,\psi$ as the new boundary-initial data. Piecing together the solutions at $T_1$, we get a weak solution of \eqref{heateq} on $M\times [0,T_2)$. Iterating this process, we obtain a global weak solution defined on $M\times [0,\infty)$. Let $\{T_k\}_{k=1}^K$ be the singular times and $\{(x^l,T_k)\}_{l=1}^{l_k}$ be the singular points at time $t=T_k$. By \eqref{inequality:10} and Lemma \ref{energy}, we have
\begin{align*}
E_g(u(T_K),v(T_K))&\leq \liminf_{t\rightarrow T_K}E_g(u(t),v(t))-\overline{\epsilon}\\
&\leq \liminf_{t\rightarrow T_1}E_g(u(t),v(t))-K\overline{\epsilon}\\
&\leq \liminf_{t\rightarrow T_1}E(u(t))-K\overline{\epsilon}\leq E(\phi_0)+\Lambda E(\psi)-K\overline{\epsilon}.
\end{align*}
Thus,
\[
K\leq \frac{E(\phi_0)+\Lambda E(\psi)}{\overline{\epsilon}}.
\]
Combing this with \eqref{inequality:11}, we get
\[
\sum_{k=1}^Kl_k\leq 2(\frac{E(\phi_0)+\Lambda E(\psi)}{\overline{\epsilon}})^2.
\]

\textbf{Step 4:} Convergence. By Lemma \ref{energy} and \ref{lem:Dirichlet-energy}, we know that
$$
\int_0^\infty |\partial_t u|^2dM dt
+\sup_{0\leq t<\infty}E(u(\cdot,t))
+\sup_{0\leq t<\infty}E(v(\cdot,t))
\leq C<\infty.
$$
Then, we have that there exists a time sequence
$t_i\rightarrow \infty$, such that
$$
\|\partial_t u(\cdot, t_i)\|_{L^2}\rightarrow 0.
$$
Combing this with the small energy regularity theory for approximate Lorentzian harmonic maps (see Lemma 2.1 and Lemma 2.2 in \cite{HJLZ-02}), we know that there exist a positive constant $\overline{\epsilon}'$ depending only on $\lambda,\Lambda,N\times\R$, a finite points set
\begin{equation}\label{def:01}
\mathcal{S}_\infty:=\{x\in M| \liminf_{i\to\infty} E(u(\cdot,t_i);B^M_r(x))>\overline{\epsilon}'\ for\ any\ r>0\}\end{equation} and a Lorentzian harmonic map $(u_\infty,v_\infty)\in W^{1,2}(M,N\times\R)\cap W^{2,2}_{loc}(M\setminus \mathcal{S}_\infty,N\times\R)$ with boundary data $(u_\infty,v_\infty)|_{\partial M}=(\phi,\psi)$ such that, up to a subsequence,
$(u,v)(\cdot,t_i)$ converges weakly in $W^{1,2}(M)$ and strongly in $W^{2,2}_{loc}(M\setminus \mathcal{S}_\infty)$ to $(u_\infty,v_\infty)$.  By removable singularity Theorem 2.8 in \cite{HJLZ-02}, we have $(u_\infty,v_\infty)\in W^{2,2}(M,N\times\R)$ which implies $(u_\infty,v_\infty)\in C^{2+\alpha}(M,N\times\R)$ by the standard elliptic theory of Laplace operator.
\end{proof}

{\small

}

\end{document}